\definecolor{webgreen}{rgb}{0,.5,0}
\numberwithin{equation}{section}
\def\C{{\mathds{C}}}
\def\N{{\mathds{N}}}
\def\Z{{\mathds{Z}}}
\def\1{{\bf 1}}
\def\a{{\bf a}}
\def\pont{$\bullet$ }
\newcommand{\DOT}{\text{\rm\Huge{.}}}
\newtheorem{theorem}{Theorem}[section]
\newtheorem{lemma}[theorem]{Lemma}
\begin{document}
	
	\title{{\bf Generalizations of Euler's $\varphi$-function with respect to systems
			of polynomials of several variables}}
	\author{Norbert Csizmazia and L\'aszl\'o T\'oth\thanks{corresponding author, e-mail: {\tt ltoth@gamma.ttk.pte.hu}}  \\
		Department of Mathematics \\
		University of P\'ecs \\
		Ifj\'us\'ag \'utja 6, 7624 P\'ecs \\ 
		Hungary}
	\date{}
	\maketitle

	\centerline{Acta Universitatis Sapientiae, Mathematica {\bf 18} (2026), Article 31, 18 pp.}
	
	\begin{abstract} We introduce a new generalization of Euler's $\varphi$-function associated with a system of 
		polynomials of several variables. We reprove by a short direct approach certain known related identities, and
		study some other special cases that do not appear in the literature. We also discuss the unitary analogues of 
		these functions. 
	\end{abstract}
	
	{\sl 2020 Mathematics Subject Classification}: Primary 11A25, Secondary 11A05, 11A07, 11N37
	
	{\sl Key Words and Phrases}: Euler's arithmetic function, Jordan's arithmetic function, unitary Euler function, 
	polynomials of several variables, congruences of several variables, quadratic congruences of several variables, systems of congruences of several variables

	\section{Introduction}
	Euler's arithmetic function $\varphi(n)$ is defined as the number of integers $x\in \{1,\ldots,n\}$ such that $\gcd(x,n)=1$. Jordan's function $J_k(n)$ of order $k$, where $k\in \N:=\{1,2,\ldots\}$, is a generalization of Euler's function, and is defined as the number of ordered $k$-tuples
	$(x_1,...,x_k) \in \{1,\ldots,n\}^k$ such that $\gcd(x_1,\ldots,x_k,n)=1$. 
	Here $J_1(n)=\varphi(n)$.
	
	Euler's function $\varphi(n)$ (throughout the paper we will use this notation) has several other 
	generalizations in the literature. See, e.g., the monographs by S\'andor and Crstici \cite[Sect.\ 3.7]{SC2004} and  Sivaramakrishnan \cite[Ch.\ V]{Siv1989}. Let $f(x)$ be a polynomial with integer coefficients. Menon 
	\cite{Men1967} defined the function $\phi_f(n)$ as the number of integers $x\in \{1,\ldots,n\}$
	such that $\gcd(f(x),n)=1$. If $f(x)=x$, then this recovers Euler's function $\varphi(n)$. The functions
	introduced independently of Menon's paper by the following authors are all special cases of Menon's function, corresponding to the given polynomials: 
	
	\pont Alder \cite{Ald1958}, Nagell \cite{Nag1923}: $f(x)=x(m-x)$, $m\in \N\cup \{0\}$, 
	
	\pont Garcia, Ligh \cite{GL1983}: $f(x)=s+(x-1)d$, $s$ and $d$ being relatively prime integers, 
	
	\pont Schemmel \cite{Sch1869}: $f(x)= x(x-1)\cdots (x-m+1)$, $m\in \N$.
	
	However, Menon's function does not include Jordan's function of order $k$. Given a system
	$S=(f_1(x),\ldots,f_k(x))$ of $k$ polynomials with integer coefficients,
	Stevens \cite{Ste1971} defined, without citing the paper by Menon \cite{Men1967}, the function $\phi_S(n)$ 
	as the number of ordered $k$-tuples $(x_1,\ldots,x_k)\in \{1,\ldots,n\}^k$ such that $\gcd(f_1(x_1),\ldots,f_k(x_k),n)=1$. If $f_1(x)=\cdots = f_k(x)=x$, then this recovers the function $J_k(n)$. Several other special cases can be discussed. Also, see Chidambaraswamy \cite{Chi1974,Chi1979}.
	
	Calder\'on et al. \cite{Cald_etal2015} introduced and studied the $k$-dimensional ($k\in \N$) generalized Euler function $\phi_k(n)$, defined as the number of ordered $k$-tuples
	$(x_1,\ldots,x_k)\in \{1,\ldots,n\}^k$ such that $\gcd(x_1^2+\cdots+x_k^2,n)=1$. Note that $\phi_2(n)$ is the 
	restriction to the set of positive integers of the Euler function defined for the set $\Z[i]$ of Gaussian integers. Also, $\phi_4(n)$ and $\phi_8(n)$ count, respectively, the number of invertible quaternions and octonions (mod $n$). If $k=1$, then $\phi_1(n)=\varphi(n)$.
	
	The second author \cite{Tot2022} introduced and investigated the generalized Euler function 
	$\varphi_k(n)$ ($k\in \N$), given as the number of ordered $k$-tuples
	$(x_1,\ldots,x_k)\in \{1,\ldots,n\}^k$ such that both the product $x_1\cdots x_k$ and the sum $x_1+\cdots +x_k$ are prime to $n$. For $k=1$ we have $\varphi_1(n)=\varphi(n)$.
	
	Observe that the functions $\phi_k(n)$ and $\varphi_k(n)$ are not recovered by Stevens' generalization.
	
	In the present paper, we introduce the following new generalization of Euler's $\varphi$-function. Let $F=(f_1(x_1,\ldots, x_k), \ldots, f_m(x_1,\ldots,x_k))$ be a system of arbitrary (nonconstant) polynomials of $k$ variables with integer coefficients, 
	where $k,m\in \N$. We define the generalized Euler function $\Phi_F(n)$ as 
	\begin{equation} \label{gen_Euler_def}
		\Phi_F(n)= \# \{ (x_1,\ldots,x_k)\in \{1,\ldots,n\}^k: \gcd(f_1(x_1,\ldots, x_k), \ldots, f_m(x_1,\ldots,x_k),n)=1\}.
	\end{equation}
	
	The functions $\phi_k(n)$ by Calder\'on et al. \cite{Cald_etal2015} and $\varphi_k(n)$ by the second author \cite{Tot2022} are obtained from \eqref{gen_Euler_def} in the case $m=1$ and $f_1(x_1,\ldots,x_k)= x_1^2+\cdots +x_k^2$, respectively  $f_1(x_1,\ldots,x_k)= x_1\cdots x_k (x_1+\cdots + x_k)$.
	
	We deduce general results for the function $\Phi_F(n)$, reprove by a short direct approach the identities 
	computing $\phi_k(n)$ and $\varphi_k(n)$ given in \cite{Cald_etal2015,Tot2022}, and consider some other special cases 
	that are not given in the literature. For example, we investigate the function corresponding to a system of linear polynomials 
	$f_i(x_1,\ldots,x_k)= a_{i1}x_1+\cdots +a_{ik}x_k+b_i$ ($1\le i\le k=m$), and the case of a quadratic polynomial $f(x_1,\ldots,x_k)= 
	a_1x_1+\cdots +a_kx_k+b$ ($m=1$). 
		
	We also discuss the unitary analogues of some of these functions. We note that it is possible to further generalize these 
	functions in the context of regular convolutions of Narkiewicz-type, but we will not go into details. We refer to the second author and Haukkanen \cite{TotHau1996} for a generalization of Stevens' function with respect to regular convolutions.
	
	The paper is organized as follows. The main general results are included in Section \ref{Section_General_results}.
	Special cases of the function $\Phi_F(n)$ are discussed in Section \ref{Section_Special_cases}. Unitary analogues are presented in Section \ref{Section_Unitary_analogues}. Some lemmas are given in Section \ref{Section_Lemmas}, while the proofs of the 
	theorems are included in Section \ref{Section_Proofs}. Finally, some more remarks are presented in Section \ref{Sect_Further_remarks}. 
	
	\section{General results} \label{Section_General_results}
	
	Let $N_F(n)$ denote the number of solutions $(x_1,\ldots, x_k)\in \{1,\ldots,n\}^k$
	of the system of congruences $f_1(x_1,\ldots, x_k)\equiv 0$ (mod $n$), $\ldots$, $f_m(x_1,\ldots, x_k)\equiv 0$ (mod $n$). It can be shown by the Chinese remainder theorem
	that the function $n\mapsto N_F(n)$ is multiplicative. Let $\mu$ denote the M\"obius function.
	
	\begin{theorem} \label{Th_main} For every system $F$ of polynomials the function $n\mapsto \Phi_F(n)$ is multiplicative and for every $n\in \N$, 
		\begin{align}
			\Phi_F(n) & = n^k \sum_{d\mid n} \frac{\mu(d) N_F(d)}{d^k} \label{eq_1}  \\
			& = n^k \prod_{p\mid n} \left(1-\frac{N_F(p)}{p^k} \right). \label{eq_2}
		\end{align}
	\end{theorem}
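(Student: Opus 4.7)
The plan is to derive formula \eqref{eq_1} by Möbius inversion applied to the coprimality condition inside the definition of $\Phi_F(n)$, then deduce multiplicativity and \eqref{eq_2} as formal consequences.

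First, I would use the identity $\sum_{d\mid \gcd(a,n)}\mu(d)=[\gcd(a,n)=1]$ to rewrite
\[
\Phi_F(n)=\sum_{(x_1,\ldots,x_k)\in\{1,\ldots,n\}^k}\ \sum_{\substack{d\mid n\\ d\mid f_j(x_1,\ldots,x_k)\ \forall j}}\mu(d),
\]
and then swap the order of summation. This reduces the problem to counting, for each fixed divisor $d$ of $n$, the number of tuples $(x_1,\ldots,x_k)\in\{1,\ldots,n\}^k$ such that $d\mid f_j(x_1,\ldots,x_k)$ for every $j=1,\ldots,m$.

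The key observation in the second step is that this divisibility condition depends only on $(x_1,\ldots,x_k)\bmod d$. Writing $n=dq$ and noting that each residue class modulo $d$ is represented exactly $q=n/d$ times in $\{1,\ldots,n\}$, the count equals $(n/d)^k N_F(d)$, where $N_F(d)$ is the number of solutions modulo $d$ of the system $f_j\equiv 0\pmod d$. Substituting back yields
\[
\Phi_F(n)=\sum_{d\mid n}\mu(d)\left(\frac{n}{d}\right)^k N_F(d)=n^k\sum_{d\mid n}\frac{\mu(d)N_F(d)}{d^k},
\]
which is \eqref{eq_1}. I expect the only subtle point here to be justifying the equidistribution of residues, but it is immediate from $d\mid n$.

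For multiplicativity and \eqref{eq_2}, I would rely on the fact, stated just before the theorem, that $n\mapsto N_F(n)$ is multiplicative (via the Chinese remainder theorem). Since $\mu$ is multiplicative, the arithmetic function $d\mapsto \mu(d)N_F(d)/d^k$ is multiplicative; its Dirichlet convolution with the constant $\mathbf{1}$ is therefore multiplicative, and multiplying by $n^k$ preserves multiplicativity. This gives multiplicativity of $\Phi_F$. Finally, since $\mu(p^j)=0$ for $j\ge 2$, the local factor at each prime $p\mid n$ equals $1-N_F(p)/p^k$, yielding the Euler product \eqref{eq_2}. No genuine obstacle arises; the argument is a clean instance of Möbius inversion, and the whole proof should fit in a few lines.
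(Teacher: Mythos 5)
Your proposal is correct and follows essentially the same route as the paper: the M\"obius identity for the coprimality indicator, interchange of summation, the count $(n/d)^k N_F(d)$ for the inner sum via equidistribution of residue classes modulo $d\mid n$, and then multiplicativity of $N_F$ to obtain both the multiplicativity of $\Phi_F$ and the Euler product \eqref{eq_2}. No discrepancies to report.
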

	
	Identity \eqref{eq_2} shows that for a given system $F$ of polynomials it is enough to know the values $N_F(p)$ for 
	the primes $p$ to obtain explicit formulas for the
	corresponding generalized Euler functions. Some special cases are discussed in Section \ref{Section_Special_cases}.
	
	We note that it is possible to investigate the more general function
	\begin{equation*}
		P_{F,h}(n)= \sum_{(x_1,\ldots,x_k) \in \{1,\ldots,n\}^k} h(\gcd(f_1(x_1,\ldots, x_k), \ldots, f_m(x_1,\ldots,x_k),n)),
	\end{equation*}
	where $h:\N \to \C$ is an arbitrary function. The function $\Phi_F(n)$ is recovered by choosing $h(1)=1$, $h(n)=0$ ($n>1$). In the special case $k=m=1$, $f_1(x_1)=x_1$ and $h(n)=n$ ($n\in \N$) we obtain the gcd-sum function (Pillai function) 
	\begin{equation*}
		P(n)= \sum_{x=1}^n \gcd(x,n),
	\end{equation*}
	see the survey by the second author \cite{Tot2010}.
	
	\begin{theorem} \label{Th_main_h} Let $F$ be an arbitrary system of polynomials and $h$ be an arithmetic function. For every $n\in \N$,
		\begin{align} \label{form_h}
			P_{F,h}(n) = n^k \sum_{d\mid n} \frac{(\mu*h)(d) N_F(d)}{d^k}, 
		\end{align}
		where $*$ denotes the convolution. If $h$ is multiplicative, then the function $n\mapsto P_{F,h}(n)$ is also multiplicative. 
	\end{theorem}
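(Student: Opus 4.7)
The plan is to partition the sum defining $P_{F,h}(n)$ according to the value of the gcd. Writing $g(x_1,\ldots,x_k) := \gcd(f_1(x_1,\ldots,x_k),\ldots,f_m(x_1,\ldots,x_k),n)$, which always divides $n$, I would regroup
\[
P_{F,h}(n) = \sum_{d\mid n} h(d)\, B(d), \qquad B(d) := \#\{(x_1,\ldots,x_k)\in\{1,\ldots,n\}^k : g(x_1,\ldots,x_k)=d\}.
\]
To evaluate $B(d)$, first I would count the looser quantity
\[
A(d) := \#\{(x_1,\ldots,x_k)\in\{1,\ldots,n\}^k : d\mid f_j(x_1,\ldots,x_k)\text{ for all }j\},
\]
and observe that since $d\mid n$, each residue class mod $d$ is represented exactly $n/d$ times in $\{1,\ldots,n\}$; hence by the definition of $N_F$ one has $A(d)=(n/d)^k N_F(d)$. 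The condition $d\mid g$ is equivalent to both $d\mid n$ and $d\mid f_j$ for all $j$, so $A$ and $B$ are related by $A(d)=\sum_{d\mid e,\, e\mid n} B(e)$ on the divisors of $n$.

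Applying M\"obius inversion on the divisor poset of $n$ yields
\[
B(d) = \sum_{\substack{e\mid n\\ d\mid e}} \mu(e/d)\, A(e),
\]
and substituting this into the expression for $P_{F,h}(n)$ gives a double sum over $d\mid e\mid n$. Swapping the order of summation so that $e$ is the outer index produces
\[
P_{F,h}(n) = \sum_{e\mid n} A(e) \sum_{d\mid e} h(d)\mu(e/d) = \sum_{e\mid n} A(e)\,(\mu*h)(e),
\]
and inserting $A(e)=n^k N_F(e)/e^k$ yields exactly \eqref{form_h}.

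For the multiplicativity claim, I would invoke the standard facts that $\mu*h$ is multiplicative whenever $h$ is (as $\mu$ is multiplicative), that $N_F$ is multiplicative (stated in the paragraph preceding Theorem~\ref{Th_main}, via the Chinese remainder theorem), and that $d\mapsto d^k$ is multiplicative. Hence the summand $(\mu*h)(d) N_F(d)/d^k$ is a multiplicative function of $d$, its divisor sum is a multiplicative function of $n$, and multiplication by the multiplicative factor $n^k$ preserves multiplicativity. The only genuinely non-routine step is the interchange of summation together with the M\"obius inversion; once the combinatorial identity $A(d)=(n/d)^k N_F(d)$ is in hand everything else is bookkeeping, so I expect that identity, and the careful verification that the $B(d)$ values partition the tuples correctly, to be the main place where care is needed.
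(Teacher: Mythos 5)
Your proof is correct and rests on the same two ingredients as the paper's argument: the counting identity $A(d)=(n/d)^k\,N_F(d)$ for $d\mid n$ (each residue class mod $d$ hit exactly $n/d$ times), and the convolution identity $h=\mathbf{1}*(\mu*h)$, followed by an interchange of summation. The only difference is organizational: the paper substitutes $h(\gcd)=\sum_{d\mid \gcd}(\mu*h)(d)$ directly into the defining sum and swaps the order of summation, whereas you first partition the tuples by the exact value of the gcd and then undo that partition by M\"obius inversion on the divisor lattice --- one extra, harmless layer of bookkeeping that lands in exactly the same place, with the multiplicativity claim handled identically in both.
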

	
	In this paper we do not investigate further properties and special cases of the function $P_{F,h}(n)$.
	An asymptotic formula with remainder term for the Euler-type function $\Phi_F(n)$ is given in the following theorem.
	
	\begin{theorem} \label{Th_asympt} Assume that the system of polynomials $F$ is such that for every $1\le i\le m$ the coefficients of $f_i(x_1,\ldots,x_k)$ are relatively prime. Then
		\begin{align} \label{asympt_form}
			\sum_{n\le x} \Phi_F(n) = x^{k+1} \prod_p \left(1-\frac{N_F(p)}{p^{k+1}} \right) +
			O(x^{k+1-1/t+\varepsilon}),
		\end{align}
		where $t$ is the minimum of degrees of the polynomials $f_i(x_1,\ldots,x_k)$ \textup{($1\le i\le m$)} and $\varepsilon > 0$ is arbitrary.
	\end{theorem}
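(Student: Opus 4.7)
The plan is to feed the M\"obius-inversion formula \eqref{eq_1} of Theorem~\ref{Th_main}, namely $\Phi_F(n)=n^k\sum_{d\mid n}\mu(d)N_F(d)/d^k$, into the partial sum and swap the order of summation by writing $n=de$:
\begin{equation*}
\sum_{n\le x}\Phi_F(n)=\sum_{d\le x}\mu(d)N_F(d)\sum_{e\le x/d}e^k.
\end{equation*}
The elementary expansion $\sum_{e\le y}e^k=\frac{y^{k+1}}{k+1}+O(y^k)$ then splits this into a main term $\frac{x^{k+1}}{k+1}\sum_{d\le x}\mu(d)N_F(d)/d^{k+1}$ and an error $O\bigl(x^k\sum_{d\le x}|\mu(d)|N_F(d)/d^k\bigr)$.

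For the main term I complete the $d$-sum to infinity; multiplicativity of $N_F$ (Chinese remainder theorem, as noted just before Theorem~\ref{Th_main}) lets me factor $\sum_{d=1}^{\infty}\mu(d)N_F(d)/d^{k+1}$ as the Euler product $\prod_p\bigl(1-N_F(p)/p^{k+1}\bigr)$, which is the claimed leading constant. The tail $\sum_{d>x}\mu(d)N_F(d)/d^{k+1}$ then contributes an additional error that must also fit into $O(x^{k+1-1/t+\varepsilon})$.

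The decisive input is a uniform estimate of the form $N_F(p)\ll p^{k-1/t}$. Here the coprime-coefficient hypothesis is crucial, as it guarantees that the reduction of each $f_i$ modulo $p$ is not the zero polynomial, so a point-counting lemma (presumably among those in Section~\ref{Section_Lemmas}) can be applied to the polynomial $f_{i_0}$ realising the minimum degree $t$, giving $N_F(p)\le N_{f_{i_0}}(p)\ll p^{k-1/t}$. Multiplicativity of $N_F$ then yields $N_F(d)\ll d^{k-1/t+\varepsilon}$ for squarefree $d$, after which elementary comparison with $\int d^{\alpha}\, d\alpha$-type integrals shows that both
\begin{equation*}
x^k\sum_{d\le x}\frac{|\mu(d)|N_F(d)}{d^k}\quad\text{and}\quad x^{k+1}\sum_{d>x}\frac{|\mu(d)|N_F(d)}{d^{k+1}}
\end{equation*}
are $O(x^{k+1-1/t+\varepsilon})$, as required. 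I expect the main obstacle to lie precisely in the single-prime bound above; once that lemma is in hand, the remainder is routine Dirichlet-convolution bookkeeping.
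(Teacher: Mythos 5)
Your proposal is correct and follows essentially the same route as the paper's own proof: the same M\"obius-inversion and swap-of-summation decomposition via \eqref{eq_1}, completion of the $d$-sum to the Euler product $\prod_p\bigl(1-N_F(p)/p^{k+1}\bigr)$ using multiplicativity of $N_F$, and the decisive bound $N_F(d)\ll d^{k-1/t+\varepsilon}$ obtained by applying Hua's point-counting lemma to the minimal-degree polynomial (the paper's Lemmas \ref{Lemma_Hua} and \ref{Lemma_Hua_kov}, where the coprime-coefficient hypothesis plays exactly the role you identify). One remark: just like the paper's own computation, your derivation produces the main term with the factor $\frac{1}{k+1}$ in front of $x^{k+1}$, so the constant in \eqref{asympt_form} as printed appears to be missing this factor; this is an issue with the statement, not with your argument.
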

	
	For special systems of polynomials $F$ the above remainder term can be improved.
	For example, the authors of paper \cite{Cald_etal2015} showed that for the function $\phi_k(n)$, corresponding to the case  $m=1$, $f_1(x_1,\ldots,x_k)=x_1^2+\cdots + x_k^2$, the error term is $O(x^k\log x)$ for $k$ even, and is $O(x^k(\log x)^{2/3}(\log \log x)^{4/3})$ for $k$ odd. This latter error term is a consequence of the error for Euler's function $\varphi(n)$ due to Walfisz. Also, according to paper \cite{Tot2022}, the error for the function $\varphi_k(n)$, obtained in the case $m=1$, $f_1(x_1,\ldots,x_k)=x_1\cdots x_k(x_1+\cdots + x_k)$, the error is $O(x^k (\log x)^{k+1})$ for every $k\ge 2$.
	
	For Stevens' function an asymptotic formula with remainder term $O(x^k)$ ($k\ge 2$) and $O(x^{1+\varepsilon})$ ($k=1$)
	has been obtained by the second author and S\'andor \cite{TotSan1989}. Also, see \cite{TotHau1996}. In the case of the function by Garcia and Ligh \cite{GL1983} the error is $O(x\log x$), see the second author \cite{Tot1987}. For the Schemmel function with parameter $m$ the error term is $O(x(\log x)^m)$,
	see Steuding and Suriajaya \cite{SS2023}.
	
	We also remark that some different asymptotic properties of the function $\phi_k(n)$ have recently been studied 
	by Banerjee et al. \cite{Ban_etal2024}.
	
	\section{Special cases} \label{Section_Special_cases}
	
	In what follows, we investigate special cases of the function $\Phi_F(n)$. 
	Using \eqref{eq_2} we can deduce explicit formulas to compute their values.
	First consider the case $k=m$ and the system of linear polynomials $f_i(x_1,\ldots,x_k)=a_{i1}x_1+\cdots +a_{ik}x_k+b_i$,
	where $a_{ij},b_i\in \Z$ ($1 \le i,j\le k$). Let $A=(a_{ij})_{1\le i,j\le k}$, $B= (b_1,\ldots,b_k)$, and let $\phi_{A,B}(n)$ denote the corresponding Euler-type function defined by \eqref{gen_Euler_def}.
	
	\begin{theorem} \label{Th_varphi_lin_gen} Let $k\in \N$ and assume that the matrix $A$ is unimodular, that is, $\det(A)=\pm 1$. Then for every $B \in \Z^k$ and every $n\in \N$ we have
		\begin{equation*}
			\phi_{A,B}(n) =n^k\prod_{p\mid n} \left(1-\frac1{p^k} \right) = J_k(n),
		\end{equation*}
		the Jordan function of order $k$.
	\end{theorem}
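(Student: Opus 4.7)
The plan is to apply the product formula \eqref{eq_2} from Theorem \ref{Th_main} and show that $N_F(p) = 1$ for every prime $p$, where $F = (f_1, \ldots, f_k)$ is the system of linear polynomials $f_i(x_1,\ldots,x_k) = a_{i1}x_1 + \cdots + a_{ik}x_k + b_i$. Once we have $N_F(p) = 1$ for all primes $p$, equation \eqref{eq_2} directly yields
\begin{equation*}
\phi_{A,B}(n) = n^k \prod_{p\mid n}\left(1 - \frac{1}{p^k}\right) = J_k(n),
\end{equation*}
which is exactly the claim.

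The key step is computing $N_F(p)$. By definition, $N_F(p)$ counts the number of $(x_1, \ldots, x_k) \in \{1, \ldots, p\}^k$ satisfying the system $f_i(x_1,\ldots,x_k) \equiv 0 \pmod{p}$ for $i=1,\ldots,k$, which in matrix form is the linear system $A\mathbf{x} \equiv -B \pmod{p}$. Since $\det(A) = \pm 1$, the determinant is a unit in $\mathbb{Z}/p\mathbb{Z}$ for every prime $p$; hence $A$ is invertible modulo $p$ and the system has a unique solution $\mathbf{x} \equiv -A^{-1}B \pmod{p}$, giving $N_F(p) = 1$.

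There is no real obstacle here: the unimodularity hypothesis is exactly what is needed to ensure that $A$ remains invertible modulo every prime (whereas the weaker condition $\det(A) \neq 0$ would only give invertibility for primes not dividing $\det(A)$). Substituting $N_F(p) = 1$ into \eqref{eq_2} completes the proof.
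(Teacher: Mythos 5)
Your proof is correct and follows essentially the same route as the paper: the paper invokes Lemma \ref{Lemma_syst_lin_cong} (unique solvability of the linear system mod $p$ when $\gcd(\det(A),p)=1$) together with identity \eqref{eq_2}, and your argument simply re-derives that lemma inline by observing that $\det(A)=\pm 1$ makes $A$ invertible modulo every prime, giving $N_F(p)=1$.
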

	
	Next, consider the function $\varphi_k(n)$ defined in the Introduction.
	This is the special case of the function $\Phi_F(n)$ corresponding to $m=1$ 
	and the polynomial $f_1(x_1,\ldots,x_k)= x_1\cdots x_k (x_1+\cdots + x_k)$.
	
	\begin{theorem} \label{Th_varphi_k} For every $k,n\in \N$,
		\begin{align} 
			\varphi_k(n) & = n^k \prod_{p\mid n} \left(1-\frac1{p} \right) \left(\left(1-\frac{1}{p}\right)^k - \frac{(-1)^{k}}{p^k} \right)  \label{varphi_k_form} \\
			&  = n^{k-1} \varphi(n) \prod_{p\mid n} \left(\left(1-\frac{1}{p}\right)^k - \frac{(-1)^{k}}{p^k} \right).
			\nonumber 
		\end{align}
	\end{theorem}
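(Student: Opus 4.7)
By Theorem \ref{Th_main}, identity \eqref{eq_2}, it suffices to compute $N_F(p)$ for each prime $p$, where $F$ consists of the single polynomial $f_1(x_1,\ldots,x_k)=x_1\cdots x_k(x_1+\cdots +x_k)$. Since $p$ is prime, $f_1(x_1,\ldots,x_k)\equiv 0 \pmod p$ holds if and only if $x_i\equiv 0 \pmod p$ for some $i$ or $x_1+\cdots+x_k\equiv 0 \pmod p$. I plan to count the complement, i.e. the quantity $\Phi_F(p) = p^k - N_F(p)$, which equals the number of $(x_1,\ldots,x_k)\in \{1,\ldots,p-1\}^k$ with $x_1+\cdots+x_k\not\equiv 0 \pmod p$.

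The main step is therefore to count $S_k(p)$, the number of $(x_1,\ldots,x_k)\in \{1,\ldots,p-1\}^k$ with $x_1+\cdots+x_k\equiv 0 \pmod p$. I would use orthogonality of additive characters: writing $\zeta=e^{2\pi i/p}$,
\begin{equation*}
S_k(p) = \frac{1}{p}\sum_{t=0}^{p-1}\left(\sum_{x=1}^{p-1}\zeta^{tx}\right)^{\!k}.
\end{equation*}
The $t=0$ term contributes $(p-1)^k$, and for each $t\ne 0$ the inner sum equals $-1$, contributing $(p-1)(-1)^k$ in total. Hence
\begin{equation*}
S_k(p) = \frac{(p-1)^k + (p-1)(-1)^k}{p}.
\end{equation*}
This is the only nontrivial computation in the proof, and it is standard.

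Subtracting from $(p-1)^k$ I get
\begin{equation*}
\Phi_F(p) = (p-1)^k - S_k(p) = \frac{(p-1)\bigl[(p-1)^k-(-1)^k\bigr]}{p},
\end{equation*}
so that
\begin{equation*}
1-\frac{N_F(p)}{p^k} = \frac{\Phi_F(p)}{p^k} = \left(1-\frac1p\right)\left(\left(1-\frac1p\right)^{\!k}-\frac{(-1)^k}{p^k}\right).
\end{equation*}
Inserting this into \eqref{eq_2} yields identity \eqref{varphi_k_form}. For the second form, factor $n=n^{k-1}\cdot n$ inside $n^k\prod_{p\mid n}(1-1/p)$ and use the classical identity $\varphi(n)=n\prod_{p\mid n}(1-1/p)$. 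The only place any real work happens is the character-sum evaluation of $S_k(p)$; everything else is bookkeeping.
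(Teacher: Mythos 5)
Your proposal is correct and follows essentially the same route as the paper: both reduce the problem via identity \eqref{eq_2} to a local count at each prime, and both evaluate the number of tuples in $\{1,\ldots,p-1\}^k$ with $x_1+\cdots+x_k\equiv 0 \pmod p$ by the same orthogonality-of-characters computation (this is exactly the quantity $B_k(p)$ in the paper's Lemma \ref{Lemma_cong_prod}). The only cosmetic difference is that you count the complement $\Phi_F(p)=p^k-N_F(p)$ directly rather than assembling $N_F(p)$ first.
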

	
	Identity \eqref{varphi_k_form} has been obtained by the second author \cite[Th.\ 2.1]{Tot2022} using
	a different method, namely induction on $k$.
	
	Now, let $m=2$ and $f_1(x_1,\ldots,x_k)= x_1\cdots x_k$, $f_2(x_1,\ldots,x_2)=x_1+\cdots + x_k$, which leads to another generalization of the Euler function, let us denote it by $\varphi_k'(n)$, 
	not given in the literature. That is,
	\begin{align*}
		\varphi_k'(n)= \# \{ (x_1,\ldots,x_k) \in \{1,\ldots,n\}^k:  \gcd(x_1\cdots x_k, x_1+\cdots +x_k,n)=1\}. 
	\end{align*}
	
	We have the following result.
	
	\begin{theorem} \label{Th_varphi_k_apost} For every $k,n\in \N$ we have
		\begin{align} 
			\varphi_k'(n) & = n^k \prod_{p\mid n} \left(1-\frac1{p}\right) \left(1+ \frac{1}{p}\left(1-\frac1{p}\right)^{k-1} + \frac{(-1)^{k}}{p^k} \right)  \label{Varphi_k_apost} \\
			& = n^{k-1} \varphi(n) \prod_{p\mid n} \left(1+ \frac{1}{p}\left(1-\frac1{p}\right)^{k-1} + \frac{(-1)^{k}}{p^k} \right). \nonumber
		\end{align}
	\end{theorem}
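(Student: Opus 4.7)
The plan is to apply Theorem \ref{Th_main}, which reduces everything to computing $N_F(p)$ for primes $p$, where $F=(f_1,f_2)$ with $f_1=x_1\cdots x_k$ and $f_2=x_1+\cdots+x_k$. Concretely, I need the number of $k$-tuples $(x_1,\ldots,x_k)\in (\Z/p\Z)^k$ such that at least one coordinate is $\equiv 0 \pmod p$ and the sum of coordinates is $\equiv 0 \pmod p$; plugging this into \eqref{eq_2} should yield the asserted product.

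To compute $N_F(p)$, I would use complementary counting on the first condition. Let $T(p)$ be the total number of tuples with $x_1+\cdots+x_k\equiv 0\pmod p$; fixing $x_1,\ldots,x_{k-1}$ freely and solving for $x_k$ gives $T(p)=p^{k-1}$. Let $U(p)$ be the number of such tuples with the additional constraint that every $x_i\not\equiv 0\pmod p$, so that $N_F(p)=T(p)-U(p)$. The count $U(p)$ is a standard character sum: writing the indicator of the sum condition as a Fourier sum over additive characters mod $p$,
\begin{equation*}
U(p)=\frac{1}{p}\sum_{t=0}^{p-1}\Bigl(\sum_{x=1}^{p-1} e^{2\pi i t x/p}\Bigr)^{k}
=\frac{(p-1)^{k}+(p-1)(-1)^{k}}{p},
\end{equation*}
since the inner sum equals $p-1$ when $t=0$ and $-1$ otherwise. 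Hence
\begin{equation*}
N_F(p)=p^{k-1}-\frac{(p-1)^{k}+(p-1)(-1)^{k}}{p}.
\end{equation*}

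The remaining step is an algebraic simplification. From \eqref{eq_2},
\begin{equation*}
1-\frac{N_F(p)}{p^{k}}
=1-\frac{1}{p}+\frac{(p-1)^{k}}{p^{k+1}}+\frac{(-1)^{k}(p-1)}{p^{k+1}},
\end{equation*}
and I would factor out $(1-1/p)$ from each term: the first two terms combine as $(1-1/p)\bigl(1+\frac{1}{p}(1-1/p)^{k-1}\bigr)$ after writing $(p-1)^k/p^{k+1}=\frac{1}{p}(1-1/p)^k$, while the last term already has the factor $(p-1)/p=(1-1/p)$ explicit. Collecting these gives exactly the product expression in \eqref{Varphi_k_apost}, and the factorization $n^k\prod_{p\mid n}(1-1/p)=n^{k-1}\varphi(n)$ yields the second form.

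The routine but main obstacle is the clean derivation of the closed form for $U(p)$ (equivalently, the number of $k$-tuples of units in $\Z/p\Z$ summing to zero) and the subsequent bookkeeping to coax the elementary expression for $1-N_F(p)/p^k$ into the factored form stated in the theorem; both are entirely mechanical once the character-sum evaluation is in place. An alternative that avoids Fourier analysis is inclusion–exclusion over the subset of coordinates forced to be $\equiv 0\pmod p$, which leads to the same count and could be substituted if a purely combinatorial argument is preferred.
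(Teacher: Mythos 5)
Your proposal is correct and follows essentially the same route as the paper: the paper's Lemma \ref{Lemma_cong_system} obtains $N'_k(p)=p^{k-1}-\frac1{p}\left((p-1)^k+(-1)^k(p-1)\right)$ by exactly your complementary counting (total solutions $p^{k-1}$ of the sum congruence minus the exponential-sum count of unit-tuples summing to zero), and then Theorem \ref{Th_varphi_k_apost} follows from \eqref{eq_2} by the same algebraic factorization. Even your suggested inclusion--exclusion alternative appears verbatim as the second proof in that lemma.
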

	
	Note that $\varphi_1'(n)=\varphi(n)$, $\varphi_2'(n)=J_2(n)$ and $\varphi_3'(n)=n^3 \prod_{p\mid n} \left(1- 1/p\right)^2\left(1+2/p\right)$.
	
	Let $\Phi^{(s)}_{k,\a,b}(n)$ be defined as the number of ordered $k$-tuples $(x_1,\ldots,x_k)\in \{1,\ldots,n\}^k$
	such that $\gcd(a_1x_1^s+\cdots + a_kx_k^s+b,n)=1$, where $k,s\in \N$ and $\a=(a_1,\ldots,a_k)\in \Z^k$, $b\in \Z$.
	This corresponds to the case $m=1$ and $f(x_1,\ldots,x_k)=a_1x_1^s+\cdots + a_kx_k^s+b$.
	
	In the case $s=1$ we have the next result.
	
	\begin{theorem} \label{Th_Phi_k_linear}
		Let $k\in \N$, $\a=(a_1,\ldots,a_k)\in \Z^k$, $b\in \Z$, and $n\in \N$. Then 
		\begin{align} \label{id_linear}
			\Phi^{(1)}_{k,\a,b}(n) = n^k \prod_{\substack{p\mid n\\ p\, \nmid \, \gcd(a_1,\ldots,a_k)}} \left(1-\frac1{p} \right),
		\end{align}
		provided that for every $p\mid n$ one has $p\nmid \gcd(a_1,\ldots,a_k,b)$. In the contrary case $\Phi^{(1)}_{k,\a,b}(n)=0$.
	\end{theorem}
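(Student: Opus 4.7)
The plan is to invoke Theorem~\ref{Th_main}, specifically the product formula \eqref{eq_2}, applied to the single polynomial $f(x_1,\ldots,x_k)=a_1x_1+\cdots+a_kx_k+b$. This reduces the whole problem to computing $N_F(p)$ for each prime $p\mid n$, where $N_F(p)$ counts solutions $(x_1,\ldots,x_k)\in\{1,\ldots,p\}^k$ of the linear congruence $a_1x_1+\cdots+a_kx_k+b\equiv 0\pmod p$. Because the polynomial is linear, the answer splits cleanly according to how $p$ divides the coefficient vector.

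For each prime $p\mid n$ I would distinguish three cases. First, if $p\mid\gcd(a_1,\ldots,a_k,b)$, then the congruence is satisfied by every tuple, so $N_F(p)=p^k$; the corresponding factor $1-N_F(p)/p^k$ in \eqref{eq_2} vanishes, giving $\Phi^{(1)}_{k,\a,b}(n)=0$ as claimed in the contrary case. Second, if $p\mid\gcd(a_1,\ldots,a_k)$ but $p\nmid b$, the congruence reduces to $b\equiv 0\pmod p$, which is impossible; hence $N_F(p)=0$ and the factor equals $1$, contributing nothing to the product. Third, if $p\nmid\gcd(a_1,\ldots,a_k)$, some $a_i$ is invertible modulo $p$; fixing the remaining $k-1$ variables freely determines $x_i$ uniquely modulo $p$, whence $N_F(p)=p^{k-1}$ and the factor becomes $1-1/p$.

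Plugging these three evaluations back into \eqref{eq_2} yields exactly the product in \eqref{id_linear}, restricted to primes $p\mid n$ with $p\nmid\gcd(a_1,\ldots,a_k)$ (the second case contributes trivial factors, the third contributes the $1-1/p$ terms, and the first is excluded by the standing hypothesis). I do not expect a real obstacle: the argument is a direct case analysis once Theorem~\ref{Th_main} is in hand. The only minor point to be careful about is bookkeeping — making sure the condition ``$p\nmid\gcd(a_1,\ldots,a_k,b)$ for every $p\mid n$'' is correctly identified as equivalent to the non-vanishing of the product, and that the indexing set for the nontrivial factors is precisely $\{p\mid n:p\nmid\gcd(a_1,\ldots,a_k)\}$.
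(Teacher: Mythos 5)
Your proposal is correct and follows essentially the same route as the paper: apply the product formula \eqref{eq_2} and evaluate $N_F(p)$ for each prime $p\mid n$ by the same case analysis on whether $p$ divides $\gcd(a_1,\ldots,a_k)$ and $b$. The only cosmetic difference is that you compute the counts $p^k$, $0$, $p^{k-1}$ directly (via invertibility of some $a_i$ modulo $p$), whereas the paper cites its Lemma~\ref{Lemma_linear} specialized to $n=p$; both give the same values.
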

	
	It follows that in the case $\gcd(a_1,\ldots,a_k)=1$, $b\in \Z$ we have $\Phi^{(1)}_{k,\a,b}(n) = n^{k-1}\varphi(n)$ for every $n\in \N$.
	
	Now consider the function $\Phi^{(2)}_{k,\a,b}(n)$ corresponding to the case $s=2$. It recovers the function $\Phi_k(n)$ by Calder\'on et al. \cite{Cald_etal2015} if $a_1=\cdots =a_k=1$ and $b=0$. Here we assume that $b=0$, and prove the next result.
	
	\begin{theorem} \label{Th_Phi_k_squares} Let $k\in \N$, $\a=(a_1,\ldots,a_k)\in \Z^k$ and $n\in \N$ such that $\gcd(a_1\cdots a_k,n)=1$. 
		
		If $k$ is odd, then
		\begin{align} \label{id_square_odd}
			\Phi^{(2)}_{k,\a,0}(n)  = n^{k-1} \varphi(n), 
		\end{align}
		and if $k$ is even, then 
		\begin{align} \label{id_square_even}
			\Phi^{(2)}_{k,\a,0}(n)  = n^{k-1} \varphi(n) \prod_{\substack{p\mid n\\ p>2}} \left(1-\frac1{p^{k/2}}(-1)^{k(p-1)/4} \left( \frac{a_1}{p} \right) \cdots \left(\frac{a_k}{p}\right) \right), 
		\end{align}
		where $\left( \frac{a_i}{p} \right)$ denote the Legendre symbols \textup{($1\le i \le k$)}.
	\end{theorem}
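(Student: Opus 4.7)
The plan is to invoke equation~\eqref{eq_2} of Theorem~\ref{Th_main} with $m=1$ and $f_1(x_1,\ldots,x_k)=a_1x_1^2+\cdots+a_kx_k^2$, reducing the problem to computing $N_F(p)$ for each prime $p\mid n$. Under the hypothesis $\gcd(a_1\cdots a_k,n)=1$, every $a_i$ is a unit modulo each such $p$, so the diagonal form $Q(x)=a_1x_1^2+\cdots+a_kx_k^2$ is non-degenerate over $\mathbb{F}_p$.

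The prime $p=2$ is handled directly: each $a_i$ is odd, hence $a_ix_i^2\equiv x_i\pmod 2$, so $N_F(2)$ equals the number of tuples in $\mathbb{F}_2^k$ with $x_1+\cdots+x_k\equiv 0\pmod 2$, which is $2^{k-1}$. The corresponding local factor in~\eqref{eq_2} is therefore $1-N_F(2)/2^k=(2-1)/2$, contributing only to the prefactor $\varphi(n)$ and not to the product over odd primes in~\eqref{id_square_even}.

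For odd primes $p\mid n$ I invoke the classical formula for the number of zeros in $\mathbb{F}_p^k$ of a non-degenerate diagonal quadratic form: if $k$ is odd then $N_F(p)=p^{k-1}$, whereas if $k$ is even then
\begin{equation*}
N_F(p)=p^{k-1}+(p-1)\,p^{k/2-1}\left(\frac{(-1)^{k/2}\,a_1\cdots a_k}{p}\right).
\end{equation*}
In the odd case $1-N_F(p)/p^k=(p-1)/p$, so multiplying over all $p\mid n$ yields $n^{k-1}\varphi(n)$ and proves~\eqref{id_square_odd}. In the even case, factoring out $(p-1)/p$ and splitting the Legendre symbol by means of $\left(\frac{-1}{p}\right)^{k/2}=(-1)^{k(p-1)/4}$ gives
\begin{equation*}
1-\frac{N_F(p)}{p^k}=\frac{p-1}{p}\left(1-\frac{1}{p^{k/2}}(-1)^{k(p-1)/4}\left(\frac{a_1}{p}\right)\cdots\left(\frac{a_k}{p}\right)\right),
\end{equation*}
and taking the product over $p\mid n$, combined with the factor $1/2$ at $p=2$, delivers~\eqref{id_square_even}.

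The main obstacle is the quadratic-form count for even $k$; this is classical (for instance, a consequence of Theorems~6.26 and~6.27 of Lidl--Niederreiter, \emph{Finite Fields}), but if a self-contained argument is preferred it can be obtained by induction on $k$, writing $N_F(p)=\sum_{t\in\mathbb{F}_p}M_k(t)\,N_{k-1}(-t/a_k)$, where $M_k(t)=1+\left(\frac{a_kt}{p}\right)$ for $t\ne 0$ and $N_{k-1}(c)$ counts the representations of $c$ by the form in $k-1$ variables. The Legendre symbols telescope, multiplying the discriminant character in $k-1$ variables by $\left(\frac{-a_k}{p}\right)$ to produce exactly $\left(\frac{(-1)^{k/2}a_1\cdots a_k}{p}\right)$. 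Once this count is established, the remainder of the argument is pure bookkeeping with~\eqref{eq_2}.
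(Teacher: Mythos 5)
Your proposal is correct and follows essentially the same route as the paper: reduce via \eqref{eq_2} to computing $N_F(p)$ for each prime $p\mid n$, handle $p=2$ directly (getting $2^{k-1}$), and for odd $p$ use the count of zeros of the nondegenerate diagonal quadratic form, which gives $p^{k-1}$ for $k$ odd and $p^{k-1}+(p-1)p^{k/2-1}\left(\frac{(-1)^{k/2}a_1\cdots a_k}{p}\right)$ for $k$ even. The only difference is that the paper derives this count self-containedly via quadratic Gauss sums (Lemma \ref{Lemma_quadratic}) where you cite the classical finite-field formula; your identity $\left(\frac{-1}{p}\right)^{k/2}=(-1)^{k(p-1)/4}$ reconciles the two forms of the discriminant character exactly as in the paper's statement.
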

	
	In the case $a_1=\cdots =a_k=1$ this reduces to \cite[Th.\ 1]{Cald_etal2015}, obtained by some different arguments.
	
	Now consider the function
	\begin{align*}
		\phi_k'(n)= \# \{ (x_1,\ldots,x_k) \in \{1,\ldots,n\}^k:  \gcd(x_1\cdots x_k, x_1^2+\cdots +x_k^2,n)=1\}, 
	\end{align*}
	not given in the literature, corresponding to the case $m=2$ and $f_1(x_1,\ldots,x_k)= x_1\cdots x_k$, $f_2(x_1,\ldots,x_2)=x_1^2+\cdots + x_k^2$. Here $\phi_1'(n)= \varphi(n)$.
	
	\begin{theorem} \label{Th_Phi_k_last}
		For every $k\in \N$ and $n\in \N$, 
		\begin{align*} 
			\phi_k'(n) = n^k \prod_{p\mid n} \left(1- \frac{T_k(p)}{p^k}\right),
		\end{align*}
		where
		\begin{align*}
			T_k(p)= \frac1{p} \left(p^k - (p-1)^k + (-1)^{k-1}(p-1) \sum_{s=0}^{\lfloor(k-1)/2\rfloor} (-1)^{s(p-1)/2} \binom{k}{2s} p^s \right)
		\end{align*}
		for $p>2$, and $T_k(2)=2^{k-1}$ for $k$ odd, $T_k(2)=2^{k-1}-1$ for $k$ even.
	\end{theorem}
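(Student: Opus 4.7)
The plan is to apply Theorem~\ref{Th_main} to the system $F=(x_1\cdots x_k,\,x_1^2+\cdots+x_k^2)$, which via identity~\eqref{eq_2} reduces the task to verifying $N_F(p)=T_k(p)$ for every prime $p$. Since the condition $p\nmid x_1\cdots x_k$ is equivalent to every $x_i\not\equiv 0\pmod p$, I would write
\[
N_F(p)=A_k(p)-C_k(p),
\]
where $A_k(p)$ is the number of $(x_1,\ldots,x_k)\in\{0,\ldots,p-1\}^k$ with $x_1^2+\cdots+x_k^2\equiv 0\pmod p$, and $C_k(p)$ counts those in which every coordinate is nonzero modulo $p$.

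For odd $p$, the count $A_k(p)$ is the classical one for the zero set of a diagonal quadratic form: $A_k(p)=p^{k-1}$ for odd $k$, and
\[
A_k(p)=p^{k-1}+(p-1)p^{k/2-1}(-1)^{(k/2)(p-1)/2}
\]
for even $k$; I would invoke this from Section~\ref{Section_Lemmas} (the same formula clearly underlies Theorem~\ref{Th_Phi_k_squares}), or else derive it by standard quadratic Gauss sums. For $C_k(p)$, inclusion--exclusion on the set of coordinates equal to $0$ gives
\[
C_k(p)=\sum_{r=0}^{k}(-1)^{k-r}\binom{k}{r}A_r(p).
\]
The leading parts $p^{r-1}$ sum by the binomial theorem to $(p-1)^k/p$, while the corrections, which arise only for even $r=2s$, contribute $\tfrac{(-1)^k(p-1)}{p}\sum_{s=0}^{\lfloor k/2\rfloor}\binom{k}{2s}p^s(-1)^{s(p-1)/2}$. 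The decisive observation is that when $k$ is even the top term $s=k/2$ of this sum equals exactly $(p-1)p^{k/2-1}(-1)^{(k/2)(p-1)/2}$, cancelling the nontrivial piece of $A_k(p)$ and truncating the range to $s=0,\ldots,k/2-1=\lfloor(k-1)/2\rfloor$; when $k$ is odd, the range $s=0,\ldots,(k-1)/2$ is already $\lfloor(k-1)/2\rfloor$ and $A_k(p)=p^{k-1}$ needs no cancellation. Rewriting $-(-1)^k=(-1)^{k-1}$ then gives the stated formula.

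For $p=2$, the congruence $x_i^2\equiv x_i\pmod 2$ collapses the second condition to ``$x_1+\cdots+x_k$ is even''; among $\{0,1\}^k$-tuples of even weight with at least one zero, the all-ones vector has weight $k$ and is therefore excluded exactly when $k$ is even, yielding $T_k(2)=2^{k-1}-1$ for even $k$ and $T_k(2)=2^{k-1}$ for odd $k$. I expect the main obstacle to be the sign-and-index bookkeeping in the cancellation between $A_k(p)$ and the top term of the inclusion--exclusion expansion of $C_k(p)$; the remaining steps are routine binomial identities.
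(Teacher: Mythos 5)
Your proposal is correct, and its first step coincides with the paper's: Lemma \ref{Lemma_syst_cong_quadr} likewise writes $T_k(p)$ as the number of all solutions of $x_1^2+\cdots+x_k^2\equiv 0 \pmod p$ minus the number of those solutions with every coordinate nonzero, and the $p=2$ case is handled by exactly your parity argument. Where you diverge is in evaluating the nonzero-coordinate count $C_k(p)$: you use inclusion--exclusion over the set of vanishing coordinates, which reduces everything to the known counts $A_r(p)$ of the unrestricted form in $r\le k$ variables (Lemma \ref{Lemma_quadratic} with $a_1=\cdots=a_k=1$), whereas the paper stays inside the exponential-sum computation, writing $C_k(p)=\frac1{p}\bigl(\sum_{j=1}^{p-1}(S(j,p)-1)^k+(p-1)^k\bigr)$ with $S(j,p)$ the quadratic Gauss sum and then expanding $S^k-(S-1)^k$ binomially via \eqref{sum_Legendre}. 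The two routes are algebraically equivalent --- your identity $C_k(p)=\sum_{r}(-1)^{k-r}\binom{k}{r}A_r(p)$ is precisely the binomial expansion of $(S-1)^k$ in disguise --- but yours is more modular (it quotes the classical diagonal-form count as a black box and makes the cancellation of the top term $s=k/2$ against the nontrivial part of $A_k(p)$ explicit), while the paper's avoids any appeal to the formula for $A_r(p)$ with $r<k$ and gets the truncated range $0\le s\le\lfloor(k-1)/2\rfloor$ directly from the vanishing of $\sum_j\left(\frac{j}{p}\right)^t$ for odd $t$. Your sign and index bookkeeping, including the observation that the $r=0$ term fits the even-$r$ formula, checks out.
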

	
	We remark that $\phi'_2(n) = J_2(n)$ ($n\in \N$), the Jordan function of order two. 
	
	\section{Unitary analogues} \label{Section_Unitary_analogues}
	
	We recall that $d$ is a unitary divisor of $n$ if $d\mid n$ and $\gcd(d,n/d)=1$, notation $d\mid \mid n$. Let $(a,b)_*$ denote, as usual, the largest divisor of $a$ which is a unitary divisor of $b$. The unitary analogue $\varphi^*(n)$ of Euler's function is defined as the number of integers $x\in \{1,\ldots,n\}$ such that $(x,n)_*=1$. The function $\varphi^*$ is multiplicative and one has $\varphi^*(n)=n\prod_{p^{\nu} \mid\mid n} (1-1/p^{\nu})$. Unitary analogues of the Jordan and Stevens functions are known in the literature; see Nageswara Rao \cite{Nag1966}, Chidambaraswamy \cite{Chi1979}.
	
	For a given system $F=(f_1(x_1,\ldots, x_k), \ldots, f_m(x_1,\ldots,x_k))$ of arbitrary (nonconstant) polynomials of $k$ variables with integer coefficients ($k,m\in \N$) we define the unitary analogue $\Phi_F^*(n)$ of the function $\Phi_F(n)$ as
	follows: 
	\begin{equation} \label{gen_unit_Euler_def}
		\Phi_F^*(n)= \# \{ (x_1,\ldots,x_k)\in \{1,\ldots,n\}^k: (\gcd(f_1(x_1,\ldots, x_k), \ldots, f_m(x_1,\ldots,x_k)),n)_*=1\}.
	\end{equation}
	
	In the special cases of $F$ discussed above this leads to the unitary analogues of the functions we have investigated. 
	
	The unitary M\"obius function $\mu^*$ is defined as the inverse of the constant $1$ function with respect to the 
	unitary convolution. Here $\mu^*(n)=(-1)^{\omega(n)}$, 
	where $\omega(n)$ is the number of distinct prime factors of $n$. See, e.g., Cohen \cite{Coh1960}, Sivaramakrishnan 
	\cite[Sect.\ 2.1]{Siv1989}.
	
	\begin{theorem} \label{Th_main_unitary} For every system $F$ of polynomials the function $n\mapsto \Phi_F^*(n)$ is multiplicative and for every $n\in \N$, 
		\begin{align}
			\Phi_F^*(n) & = n^k \sum_{d\mid\mid n} \frac{\mu^*(d) N_F(d)}{d^k} \label{eq_1_unit}  \\
			& = n^k \prod_{p^\nu \mid \mid n} \left(1-\frac{N_F(p^\nu)}{p^{\nu k}} \right). \label{eq_2_unit}
		\end{align}
	\end{theorem}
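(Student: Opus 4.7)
The plan is to parallel the proof of Theorem~\ref{Th_main}, replacing ordinary M\"obius inversion over all divisors with unitary M\"obius inversion over unitary divisors. The first step is to record the detection identity
\begin{equation*}
[(a,n)_*=1] \;=\; \sum_{\substack{d\mid\mid n\\ d\mid a}} \mu^*(d),
\end{equation*}
valid for any integer $a$ and any $n\in\N$. To justify it, I note that every unitary divisor $d$ of $n$ is uniquely a product of prime powers $p^\nu\mid\mid n$, and the further restriction $d\mid a$ picks out exactly those $p^\nu\mid\mid n$ that also divide $a$; since $\mu^*(d)=(-1)^{\omega(d)}$, the sum collapses to $(1+(-1))^{j}$, where $j$ is the number of such prime powers. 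This equals $1$ when $j=0$ (equivalently, $(a,n)_*=1$) and $0$ otherwise.

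Next, I would apply this identity with $a=\gcd(f_1(x_1,\ldots,x_k),\ldots,f_m(x_1,\ldots,x_k))$ inside the definition \eqref{gen_unit_Euler_def} and interchange the order of summation:
\begin{equation*}
\Phi_F^*(n) \;=\; \sum_{d\mid\mid n} \mu^*(d)\,\#\bigl\{(x_1,\ldots,x_k)\in\{1,\ldots,n\}^k : d\mid f_i(x_1,\ldots,x_k)\ \text{for all } i\bigr\}.
\end{equation*}
Since $d\mid n$, the standard lifting argument shows that each solution of the system modulo $d$ gives rise to exactly $(n/d)^k$ tuples in $\{1,\ldots,n\}^k$, so the inner count equals $(n/d)^k N_F(d)$. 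Factoring out $n^k$ yields \eqref{eq_1_unit}.

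For \eqref{eq_2_unit}, I would exploit the multiplicativity of $N_F$ (recorded just before Theorem~\ref{Th_main}) together with the multiplicativity of $\mu^*$. Writing every $d\mid\mid n$ uniquely as $d=\prod_{p^\nu\mid\mid n} p^{\nu\varepsilon_p}$ with $\varepsilon_p\in\{0,1\}$, the sum in \eqref{eq_1_unit} factors over the unitary prime-power components of $n$:
\begin{equation*}
\sum_{d\mid\mid n}\frac{\mu^*(d)N_F(d)}{d^k} \;=\; \prod_{p^\nu\mid\mid n}\left(1-\frac{N_F(p^\nu)}{p^{\nu k}}\right).
\end{equation*}
This establishes \eqref{eq_2_unit}, and multiplicativity of $n\mapsto\Phi_F^*(n)$ is immediate from the resulting product representation (or, alternatively, from the fact that the unitary convolution of two multiplicative functions is multiplicative).

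The only genuine subtlety is getting the detection identity right: one must sum $\mu^*(d)$ over $d\mid\mid n$ with $d\mid a$, rather than over $d\mid\mid \gcd(a,n)$ or over $d\mid\mid(a,n)_*$, since $(a,n)_*$ need not itself be a unitary divisor of $n$ in a form amenable to direct inversion. Once this combinatorial identity is in hand, the remainder is a routine lifting count followed by a factorization of a multiplicative sum indexed by unitary divisors, exactly mirroring the ordinary case.
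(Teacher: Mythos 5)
Your proof is correct and follows essentially the same route as the paper: a unitary M\"obius detection of $(a,n)_*=1$ as a sum of $\mu^*(d)$ over $d\mid\mid n$ with $d\mid a$, followed by the lifting count $(n/d)^k N_F(d)$ and factorization of the multiplicative sum over the unitary prime-power components of $n$. (Only your closing aside is slightly off: $(a,n)_*$ is by definition a unitary divisor of $n$, and summing $\mu^*(d)$ over $d\mid\mid (a,n)_*$ is exactly the inversion the paper performs, since $d\mid\mid (a,n)_*$ is equivalent to $d\mid a$ and $d\mid\mid n$, i.e.\ to the condition you use.)
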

	
	Identity \eqref{eq_2_unit} shows that it is necessary to know the values $N_F(p^\nu)$ for prime powers $p^\nu$ ($\nu \ge 1$) to obtain explicit formulas for the corresponding generalized Euler functions.
	
	We present only the following special cases. We leave it to the interested readers to develop other such unitary Euler-type functions. 
	
	Consider the unitary Euler-type function $\phi^{*}_{A,B}(n)$, defined by \eqref{gen_unit_Euler_def},
	corresponding to $k=m$ and the system of linear polynomials $f_i(x)=a_{i1}x_1+\cdots +a_{ik}x_k+b_i$,
	where $A=(a_{ij})_{1\le i,j\le k}$ is an integer matrix and $B= (b_1,\ldots,b_k) \in \Z^k$.
	
	\begin{theorem} \label{Th_varphi_unit_lin_gen} Let $k\in \N$ and assume that the matrix $A$ is unimodular, that is,  $\det(A)=\pm 1$. Then for every $B= (b_1,\ldots,b_k) \in \Z^k$ and every $n\in \N$ we have
		\begin{equation*}
			\phi^{*}_{A,B}(n) =n^k\prod_{p^\nu \mid \mid n} \left(1-\frac1{p^{\nu k}} \right) = J^{*}_k(n),
		\end{equation*}
		the unitary Jordan function of order $k$.
	\end{theorem}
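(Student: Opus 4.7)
The plan is to reduce the statement to Theorem \ref{Th_main_unitary} and to compute $N_F(p^\nu)$ for each prime power $p^\nu \| n$, where $F=(f_1,\ldots,f_k)$ is the system of linear polynomials $f_i(\mathbf{x})=a_{i1}x_1+\cdots +a_{ik}x_k+b_i$. By formula \eqref{eq_2_unit}, it suffices to establish that $N_F(p^\nu)=1$ for every prime power $p^\nu$; plugging this into the product immediately yields the claimed expression, which matches the standard formula for the unitary Jordan function $J_k^*(n)$.

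To compute $N_F(p^\nu)$, I would rewrite the system of congruences as the single matrix congruence $A\mathbf{x}\equiv -B \pmod{p^\nu}$, with $\mathbf{x}=(x_1,\ldots,x_k)^\top$. Since $\det(A)=\pm 1$, the matrix $A$ is a unit in the ring of $k\times k$ matrices over $\Z/p^\nu\Z$, so $A$ is invertible modulo $p^\nu$ for every prime power $p^\nu$. Consequently, the system has the unique solution $\mathbf{x}\equiv -A^{-1}B \pmod{p^\nu}$, i.e., exactly one representative tuple in $\{1,\ldots,p^\nu\}^k$. This gives $N_F(p^\nu)=1$ for every prime $p$ and every $\nu\ge 1$.

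Substituting this into \eqref{eq_2_unit} of Theorem \ref{Th_main_unitary}, I obtain
\begin{equation*}
\phi^{*}_{A,B}(n)=n^k\prod_{p^\nu \| n}\left(1-\frac{1}{p^{\nu k}}\right),
\end{equation*}
which is the multiplicative evaluation of $J^*_k(n)$ (the unitary analogue of Jordan's function of order $k$, defined and computed as in \cite{Nag1966}).

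There is no real obstacle here: the unimodularity of $A$ is exactly the condition needed to guarantee invertibility modulo every prime power, which in turn is the input required by the unitary version of the general formula. This mirrors the proof of Theorem \ref{Th_varphi_lin_gen}, with the distinction that one now needs the existence and uniqueness of the solution modulo each $p^\nu$ (not only modulo primes $p$), and this is automatic from $\det(A)=\pm 1$ being a unit in $\Z/p^\nu\Z$.
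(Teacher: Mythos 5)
Your proposal is correct and follows essentially the same route as the paper: the paper's proof simply invokes Lemma \ref{Lemma_syst_lin_cong} (with $n=p^\nu$, where $\gcd(\det(A),p^\nu)=1$ is automatic from unimodularity) to get $N_F(p^\nu)=1$ and then applies \eqref{eq_2_unit}. Your inline justification via the invertibility of $A$ over $\Z/p^\nu\Z$ is just a direct proof of that lemma in the relevant case, so there is nothing substantively different.
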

	
	Let $\varphi_k^*(n)$ be defined as
	\begin{equation*} 
		\varphi_k^*(n)= \# \{ (x_1,\ldots,x_k)\in \{1,\ldots,n\}^k: (\gcd(x_1\cdots x_k, x_1+\cdots +x_k),n)_*=1\},
	\end{equation*}
	being the unitary analogue of the function $\varphi_k(n)$ given in the Introduction. Here $\varphi_1^*(n)= \varphi^*(n)$. In the case $k=2$ we have the next result.
	
	\begin{theorem} \label{Th_unitary_2} For every $n\in \N$, 
		\begin{align*}
			\varphi_2^*(n)  = n^2 \prod_{p^{2\nu} \mid \mid n} \left(1-\frac1{p^{3\nu}} \right)
			\prod_{p^{2\nu-1} \mid \mid n} \left(1-\frac1{p^{3\nu-1}} \right).
		\end{align*}
	\end{theorem}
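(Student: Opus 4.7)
The plan is to invoke identity \eqref{eq_2_unit} of Theorem \ref{Th_main_unitary} with $k=2$ and the two-polynomial system $F=(x_1x_2,\,x_1+x_2)$. This reduces the problem to computing, for each prime power $p^\nu$, the quantity $N_F(p^\nu)$: the number of pairs $(x_1,x_2)\in\{1,\ldots,p^\nu\}^2$ with $p^\nu\mid x_1x_2$ and $p^\nu\mid x_1+x_2$. I will show that
\[
N_F(p^\nu)=p^{\lfloor \nu/2\rfloor},
\]
after which the local factor $1-N_F(p^\nu)/p^{2\nu}=1-p^{-(2\nu-\lfloor \nu/2\rfloor)}$ becomes $1-1/p^{3\nu'}$ when $\nu=2\nu'$ and $1-1/p^{3\nu'-1}$ when $\nu=2\nu'-1$, matching the two products in the statement after renaming the exponent.

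To establish the count, I would first argue that the valuations $v_p(x_1)$ and $v_p(x_2)$ must agree. This uses the standard ultrametric identity: if they differ, then $v_p(x_1+x_2)$ equals their minimum, and requiring this to be $\geq \nu$ is impossible inside $\{1,\ldots,p^\nu\}$, where the only element with $v_p\geq \nu$ is $p^\nu$ itself (which would force equal valuations). Setting $a:=v_p(x_1)=v_p(x_2)$, the product condition $p^\nu\mid x_1x_2$ reads $2a\geq \nu$, so $a\geq \lceil \nu/2\rceil$. The extreme case $a=\nu$ contributes only the pair $(p^\nu,p^\nu)$ (and one verifies this remains valid at $p=2$, where $x_1+x_2=2^{\nu+1}$). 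For $\lceil \nu/2\rceil\leq a<\nu$, write $x_i=p^a u_i$ with $\gcd(u_i,p)=1$ and $u_i\in\{1,\ldots,p^{\nu-a}-1\}$; the sum condition becomes $p^{\nu-a}\mid u_1+u_2$, which is uniquely solved by $u_2=p^{\nu-a}-u_1$ (automatically coprime to $p$). Level $a$ therefore contributes $\varphi(p^{\nu-a})=p^{\nu-a}-p^{\nu-a-1}$ pairs.

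Summing and telescoping gives
\[
N_F(p^\nu)=1+\sum_{j=1}^{\lfloor \nu/2\rfloor}(p^j-p^{j-1})=p^{\lfloor \nu/2\rfloor},
\]
as required. Substituting back into \eqref{eq_2_unit} and separating the local factors according to the parity of the exponent completes the proof. The main obstacle is the count $N_F(p^\nu)$ itself; the key observations are the reduction to $v_p(x_1)=v_p(x_2)$ via the ultrametric identity and the correct handling of the boundary case $a=\nu$ (with its attendant $p=2$ check). Once the count is in hand, the remainder is routine bookkeeping.
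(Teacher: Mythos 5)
Your proposal is correct and follows essentially the same route as the paper: reduce via \eqref{eq_2_unit} to showing $N'_2(p^\nu)=p^{\lfloor \nu/2\rfloor}$ (the paper's Lemma \ref{Lemma_cong_system_prime_power}), which both you and the paper prove by stratifying the solutions according to the $p$-adic valuation of the variables and summing $\sum_{c=0}^{\lfloor \nu/2\rfloor}\varphi(p^c)=p^{\lfloor \nu/2\rfloor}$. The only (cosmetic) difference is that you justify the one-solution-per-level count by first forcing $v_p(x_1)=v_p(x_2)$ via the ultrametric inequality, whereas the paper solves a linear congruence in the cofactor of $x_2$.
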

	
	Let $\Phi^{(s)*}_{k,\a,b}(n)$ be defined as the number of ordered $k$-tuples $(x_1,\ldots,x_k)\in \{1,\ldots,n\}^k$
	such that $(a_1x_1^s+\cdots + a_kx_k^s+b,n)_*=1$, where $k,s\in \N$ and $\a=(a_1,\ldots,a_k)\in \Z^k$, $b\in \Z$.
	This corresponds to the case $m=1$ and $f(x_1,\ldots,x_k)=a_1x_1^s+\cdots + a_kx_k^s+b$.
	
	In the case $s=1$ we have the next result.
	
	\begin{theorem} \label{Th_Phi_k_linear_unit}
		Let $k\in \N$, $\a=(a_1,\ldots,a_k)\in \Z^k$, $b\in \Z$, and $n\in \N$. Then 
		\begin{align*} 
			\Phi^{(1)*}_{k,\a,b}(n) & = n^k \prod_{\substack{p^\nu \mid \mid n\\ \gcd(a_1,\ldots,a_k,p^{\nu})\mid b}} \left(1-\frac{\gcd(a_1,\ldots,a_k,p^\nu)}{p^\nu} \right).
		\end{align*}
	\end{theorem}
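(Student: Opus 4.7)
The plan is to apply Theorem \ref{Th_main_unitary} and reduce the problem to computing $N_F(p^\nu)$ for each prime power $p^\nu \mid\mid n$, where $F=(f)$ with $f(x_1,\ldots,x_k)=a_1x_1+\cdots+a_kx_k+b$. By identity \eqref{eq_2_unit} it then suffices to determine the local factor $1-N_F(p^\nu)/p^{\nu k}$ for each such $p^\nu$.

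First I would count the solutions in $\{1,\ldots,p^\nu\}^k$ of the linear congruence $a_1x_1+\cdots+a_kx_k \equiv -b \pmod{p^\nu}$. The standard fact I would invoke is that the image of the $\Z/p^\nu\Z$-linear map $(x_1,\ldots,x_k)\mapsto a_1x_1+\cdots+a_kx_k$ is the cyclic subgroup generated by $g:=\gcd(a_1,\ldots,a_k,p^\nu)$ in $\Z/p^\nu\Z$, hence has cardinality $p^\nu/g$; since this is a group homomorphism, all nonempty fibers have the same size $g\cdot p^{\nu(k-1)}$. Consequently
\begin{equation*}
N_F(p^\nu)=\begin{cases} g\cdot p^{\nu(k-1)}, & \text{if } g\mid b,\\ 0, & \text{otherwise,}\end{cases}
\end{equation*}
where $g=\gcd(a_1,\ldots,a_k,p^\nu)$.

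Substituting into \eqref{eq_2_unit}, the local factor at a prime power $p^\nu\mid\mid n$ becomes
\begin{equation*}
1-\frac{N_F(p^\nu)}{p^{\nu k}}=\begin{cases} 1-\dfrac{\gcd(a_1,\ldots,a_k,p^\nu)}{p^\nu}, & \text{if } \gcd(a_1,\ldots,a_k,p^\nu)\mid b,\\ 1, & \text{otherwise,}\end{cases}
\end{equation*}
and the latter factors of value $1$ can simply be dropped from the product. Collecting over the unitary prime power divisors of $n$ yields exactly the claimed formula for $\Phi^{(1)*}_{k,\a,b}(n)$.

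The only genuine content is the counting of solutions of the linear congruence, which is routine but must be stated carefully because $p^\nu$ is a prime power rather than a prime (so $\Z/p^\nu\Z$ is not a field and one cannot invoke rank arguments directly); the cleanest route is the fiber-size argument via the image subgroup $g\Z/p^\nu\Z$ sketched above, or equivalently a direct induction reducing modulo a pivot coefficient of maximal $p$-adic valuation. Apart from that, the proof is a mechanical application of Theorem \ref{Th_main_unitary}.
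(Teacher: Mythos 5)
Your proof is correct and follows essentially the same route as the paper: apply identity \eqref{eq_2_unit} of Theorem \ref{Th_main_unitary} and evaluate the local factors using the count of solutions of $a_1x_1+\cdots+a_kx_k\equiv -b \pmod{p^\nu}$, which is exactly the content of the paper's Lemma \ref{Lemma_linear}. The only difference is cosmetic: you re-derive that count via the fiber-size argument for the homomorphism $(x_1,\ldots,x_k)\mapsto a_1x_1+\cdots+a_kx_k$, whereas the paper proves Lemma \ref{Lemma_linear} with exponential sums; both are valid.
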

	
	It follows that $\Phi^{(1)*}_{k,\a,b}(n)=0$ if and only if there exists $p^\nu \mid \mid n$ such that 
	$p^\nu \mid \gcd(a_1,\ldots,a_k,b)$. If $\gcd(a_1,\ldots,a_k)=1$ and $b=0$, then $\Phi^{(1)*}_{k,\a,0}(n) = 
	n^{k-1}\varphi^*(n)$ for every $n\in \N$.
	
	The case $s=2$ can be treated using known explicit formulas for the number $N_k(\a,b,n)$ of solutions of the quadratic congruence $a_1x_1^2+\cdots + a_kx_k^2\equiv b$ (mod $n$). See Section \ref{Sect_Further_remarks} for more details.
	
	\section{Lemmas} \label{Section_Lemmas}
	
	To estimate the error term of the asymptotic formula of Theorem \ref{Th_asympt}, we need the following lemmas.
	
	\begin{lemma} {\rm (Hua \cite[Lemma\ 2.3]{Hua1965})} \label{Lemma_Hua}
		Let $f(x_1, \ldots,x_k)$ be a polynomial with integer coefficients of $k$ variables of degree $t$ \textup{($k,t\ge 1$)} such that not all of its coefficients are divisible with the prime $p$. Let $N_f(p^\nu)$ denote the number of solutions of the congruence 
		\begin{align*}
			f(x_1, \ldots, x_k) \equiv 0 \pmod{p^\nu}
		\end{align*}
		of the prime power modulus $p^\nu$ \textup{($\nu \ge 1$)}. Then 
		\begin{align*}
			N_f(p^{\nu}) \leq C (\nu + 1)^{k-1} p^{\nu (k-1/t)},
		\end{align*}
		where $C=C(k,t)$ is a constant depending only on $k$ and $t$.
	\end{lemma}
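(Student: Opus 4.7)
I would prove this by induction on the number of variables $k$, treating the univariate case first and using it as the engine for the multivariate reduction. For the base case $k=1$, where the target bound reads $N_f(p^\nu) \le C\, p^{\nu(1-1/t)}$, I would partition solutions modulo $p^\nu$ by their reductions modulo $p$ (at most $t$ classes by Lagrange's theorem applied to the nonzero reduction $\bar f \in \mathbb{F}_p[x]$), and for each class $x_0$ write a lift as $x_0 + py$ and Taylor-expand $f(x_0+py) = \sum_j f^{(j)}(x_0)(py)^j/j!$, controlling the number of liftings through the $p$-adic valuations of the $f^{(j)}(x_0)$. A standard pigeonhole on these valuations caps the lifting count per class at $p^{\nu(1-1/t)}$, which gives the desired bound.

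For the inductive step, I would write $f = \sum_{j=0}^{t} a_j(x_1,\ldots,x_{k-1})\, x_k^j$. The hypothesis forces some $a_{j_0}$ to be a nontrivial polynomial of degree at most $t$ in $k-1$ variables with a coefficient coprime to $p$, so the inductive hypothesis is available for $a_{j_0}$. The idea is to stratify the tuples $\mathbf{y} = (x_1, \ldots, x_{k-1}) \pmod{p^\nu}$ by the common $p$-adic valuation $e(\mathbf{y}) := \min_{j} v_p(a_j(\mathbf{y}))$ (capped at $\nu$). On the unramified stratum $\{e(\mathbf{y}) = 0\}$ the specialization $f(\mathbf{y},\cdot)$ is univariate of degree at most $t$ with a coefficient coprime to $p$, so the base case gives at most $C\, p^{\nu(1-1/t)}$ choices of $x_k$ per $\mathbf{y}$; summed over the trivial bound of $p^{\nu(k-1)}$ tuples this already yields the main term $C\, p^{\nu(k-1/t)}$.

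The main obstacle is the ramified stratum $\{e(\mathbf{y}) = e\}$ with $e \ge 1$, where the univariate count on $f(\mathbf{y},\cdot)$ collapses. For such $\mathbf{y}$, all coefficients of $f(\mathbf{y},\cdot)$ are divisible by $p^e$, but by the very definition of $e(\mathbf{y})$ at least one of the divided coefficients $a_j(\mathbf{y})/p^e$ is coprime to $p$; so the base case applies to $p^{-e}f(\mathbf{y},\cdot)$ modulo $p^{\nu-e}$, and each such solution lifts to $p^e$ residues modulo $p^\nu$, giving at most $C\, p^{\nu - (\nu-e)/t}$ values of $x_k$. Simultaneously, $e(\mathbf{y}) \ge e$ forces $a_{j_0}(\mathbf{y}) \equiv 0 \pmod{p^e}$, and applying the inductive hypothesis to $a_{j_0}$ modulo $p^e$ bounds the number of such $\mathbf{y}$ modulo $p^e$ by $C(e+1)^{k-2} p^{e((k-1)-1/t)}$, with a free factor $p^{(\nu-e)(k-1)}$ for lifts to $\pmod{p^\nu}$. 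Multiplying these contributions one finds the $e$-dependent exponents telescope to $\nu(k-1/t)$, so summing over $e = 1, \ldots, \nu$ produces $O\bigl((\nu+1)^{k-1} p^{\nu(k-1/t)}\bigr)$, matching the target. The delicate step is precisely the $p$-adic factorization bookkeeping — ensuring that $p^{-e} f(\mathbf{y},\cdot)$ retains a unit coefficient exactly because $e(\mathbf{y}) = e$, so that the base case may be reapplied after factoring — which is what allows the induction to close cleanly while producing the logarithmic-in-$\nu$ loss $(\nu+1)^{k-1}$.
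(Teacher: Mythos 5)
The paper does not actually prove this lemma: it is imported verbatim from Hua's book (Lemma~2.3 of \cite{Hua1965}) and used as a black box, so there is no in-paper argument to compare against. Your plan is a genuine reconstruction, and it follows the classical route (induction on the number of variables, reducing to the one-variable case), which is essentially how Hua proves it. The bookkeeping in your inductive step does close: for the stratum $e(\mathbf{y})=e$ the two factors multiply to
\begin{equation*}
\left(C(e+1)^{k-2}p^{e(k-1-1/t)}\, p^{(\nu-e)(k-1)}\right)\cdot\left(C\,p^{\,e+(\nu-e)(1-1/t)}\right)=C^{2}(e+1)^{k-2}p^{\nu(k-1/t)},
\end{equation*}
so the $e$-dependence in the exponent cancels exactly and summing over $0\le e\le\nu$ yields the stated $(\nu+1)^{k-1}$ loss; the degree inequalities also point the right way, since $\deg a_{j_0}\le t$ and $\deg_{x_k}f\le t$ give $1/\deg\ge 1/t$. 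Two small cases should be said aloud: if the chosen $a_{j_0}$ is a unit constant the inductive hypothesis is not literally applicable, but then every ramified stratum is empty and nothing is needed; and in the capped stratum $e=\nu$ one simply counts all $p^{\nu}$ values of $x_k$, which is consistent with your formula.

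The one place where the plan is genuinely thinner than a proof is the univariate base case. ``A standard pigeonhole on these valuations'' undersells the difficulty: the naive recursion (at most $t$ roots mod $p$, Taylor-expand and recurse on each) multiplies the constant by $t$ at every level, and the recursion depth can be of order $\nu$, so without tracking multiplicities the constant is not controlled. You need either a careful induction on $\nu$ in which the multiplicity of each root mod $p$ bounds both the drop in modulus and the branching, or a cleaner one-shot argument: if $x_1,\dots,x_{t+1}$ were solutions mod $p^{\nu}$ pairwise incongruent mod $p^{\lceil\nu/t\rceil}$, then every Newton divided difference $f[x_1,\dots,x_{j+1}]$ ($0\le j\le t$) has positive $p$-adic valuation, and Newton's interpolation formula then forces $p$ to divide every coefficient of $f$, contrary to hypothesis; hence at most $t$ residue classes mod $p^{\lceil\nu/t\rceil}$ contain solutions and $N_f(p^{\nu})\le t\,p^{\nu(1-1/t)}$. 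With the base case pinned down in some such way, your induction is complete and proves the lemma as stated.
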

	
	\begin{lemma} \label{Lemma_Hua_kov} Assume that the system of polynomials $F$ is such that for every $1\le i\le m$ the coefficients of $f_i(x_1,\ldots,x_k)$ are relatively prime. Let $t$ be the minimum of degrees of the polynomials $f_i(x_1,\ldots,x_k)$ \textup{($1\le i\le m$)}. Then for every $\varepsilon>0$, 
		\begin{equation*}
			N_F(n) = O(n^{k-1/t+\varepsilon}).
		\end{equation*} 
	\end{lemma}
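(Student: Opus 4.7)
The plan is to combine the multiplicativity of $n \mapsto N_F(n)$ (mentioned at the start of Section \ref{Section_General_results}) with Hua's estimate (Lemma \ref{Lemma_Hua}) applied to a single polynomial of minimum degree. Since $N_F$ is multiplicative, it suffices to bound $N_F(p^\nu)$ for every prime power $p^\nu$ and then take the product over $p^\nu \mid\mid n$.

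First I would pick an index $j$ such that $\deg f_j = t$. Every simultaneous solution of the system modulo $p^\nu$ is a fortiori a solution of the single congruence $f_j(x_1,\ldots,x_k) \equiv 0 \pmod{p^\nu}$, so $N_F(p^\nu) \le N_{f_j}(p^\nu)$. The hypothesis that the coefficients of $f_j$ are relatively prime guarantees that no prime $p$ divides all coefficients of $f_j$, which is exactly the condition required by Lemma \ref{Lemma_Hua}. Hence, for every prime power $p^\nu$,
\begin{equation*}
N_F(p^\nu) \le N_{f_j}(p^\nu) \le C\,(\nu+1)^{k-1}\, p^{\nu(k-1/t)},
\end{equation*}
with $C=C(k,t)$ independent of $p$ and $\nu$.

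Multiplying over $p^\nu \mid\mid n$ and using multiplicativity, I would obtain
\begin{equation*}
N_F(n) \le C^{\omega(n)} \Bigl(\prod_{p^\nu \mid\mid n} (\nu+1)^{k-1}\Bigr) \, n^{k-1/t}.
\end{equation*}
The factor $C^{\omega(n)}$ is $O(n^{\varepsilon/2})$ by the standard bound $\omega(n)=O(\log n/\log\log n)$, and the product $\prod (\nu+1)^{k-1}$ is a multiplicative function of $n$ that grows like a power of the divisor function, hence is also $O(n^{\varepsilon/2})$. Putting these together yields $N_F(n) = O(n^{k-1/t+\varepsilon})$, as claimed.

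There is no real obstacle here: the content is essentially bookkeeping around Hua's lemma. The only point that must be checked is that Hua's lemma is in fact applicable at every prime $p$, which is precisely the role of the coprimality hypothesis on the coefficients of each $f_i$; without it, one would have to discard the primes dividing all coefficients of $f_j$, which could potentially fail to cover all primes dividing $n$.
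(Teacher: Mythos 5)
Your proposal is correct and follows essentially the same route as the paper: bound $N_F(p^\nu)$ by the solution count of the single congruence for a polynomial of minimal degree $t$, apply Hua's lemma, and use multiplicativity together with $\tau(n)=O(n^\varepsilon)$ and $C^{\omega(n)}=O(n^\varepsilon)$. The only cosmetic difference is that the paper bounds $C^{\omega(n)}$ via $C^{\omega(n)}\le \tau(n)^{\log_2 C}$ rather than via $\omega(n)=O(\log n/\log\log n)$; both are standard and equivalent for this purpose.
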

	
	\begin{proof}[Proof of Lemma {\rm \ref{Lemma_Hua_kov}}] 
		The number $N_F(p^{\nu})$ of solutions of the system of congruences 
		\begin{align*}
			f_i(x_1,\ldots,x_k)\equiv 0 \quad \text{(mod $p^{\nu}$)} \quad (1\le i\le m)
		\end{align*}
		is at most the number of solutions of any of the congruences $f_i(x_1,\ldots,x_k)\equiv 0$ (mod $p^{\nu}$). Select the congruence corresponding to the polynomial $f_i(x_1,\ldots,x_k)$ 
		of the smallest degree. This will give the best estimate using Lemma \ref{Lemma_Hua}. We obtain 
		\begin{align*}
			N_F(p^\nu) \leq C (\nu + 1)^{k-1} p^{\nu (k-1/t)},
		\end{align*} 
		and by the multiplicativity of $N_F(n)$, 
		\[
		N_F(n) \leq C^{\omega(n)} \tau(n)^{k-1} n^{k-1/t}, 
		\]
		where $\omega(n)$ is the number of distinct prime factors of $n$, and 
		$\tau(n)= \sum_{d\mid n} 1$. Using the familiar estimate $\tau(n)=O(n^{\varepsilon})$ and its consequence
		$C^{\omega(n)} =(2^{\omega(n)}){^{\log_2 C}}\le \tau(n)^{\log_2 C} = O(n^{\varepsilon})$ we deduce that
		\begin{align*}
			N_F(n) =O(n^{k-1/t + \varepsilon}),
		\end{align*}
		where $k$ is fixed and $\varepsilon > 0$ is arbitrary. 
	\end{proof}
	
	\begin{lemma}  \label{Lemma_sum} Let $s$ be a real number. One has the well known estimates 
		\begin{align*}
			\sum_{n\le x} n^s & = \frac{x^{s+1}}{s+1}+ O(x^s), \quad s\ge 0, \\    
			\sum_{n\le x} n^{-s} & = O(x^{1-s}), \quad 0<s<1, \\ 
			\sum_{n>x} n^{-s} & = O(x^{1-s}), \quad s>1.
		\end{align*}
	\end{lemma}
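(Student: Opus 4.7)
The plan is to derive each of the three estimates by elementary integral comparison, using the monotonicity of $t \mapsto t^s$ when $s \ge 0$ and of $t \mapsto t^{-s}$ when $s > 0$. All three bounds are standard and there is no substantive obstacle; the lemma is a utility result feeding into the proof of Theorem \ref{Th_asympt}.

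For the first estimate with $s \ge 0$, since $t^s$ is non-decreasing on $[0,\infty)$, I would sandwich
\[
\int_0^{\lfloor x \rfloor} t^s\, dt \;\le\; \sum_{n=1}^{\lfloor x \rfloor} n^s \;\le\; \int_1^{\lfloor x \rfloor + 1} t^s\, dt.
\]
Evaluating both integrals yields a main term of the form $\lfloor x \rfloor^{s+1}/(s+1)$ together with an error of size $O(\lfloor x \rfloor^{s}) = O(x^s)$. Replacing $\lfloor x \rfloor^{s+1}$ by $x^{s+1}$ contributes a further error of $O(x^s)$ via the mean value theorem applied to $u \mapsto u^{s+1}$ on $[\lfloor x \rfloor, x]$, so that $\sum_{n \le x} n^s = x^{s+1}/(s+1) + O(x^s)$ as claimed.

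For the second estimate with $0 < s < 1$, the monotonicity of $t^{-s}$ on $(0,\infty)$ gives
\[
\sum_{n=1}^{\lfloor x \rfloor} n^{-s} \;\le\; 1 + \int_1^x t^{-s}\, dt \;=\; 1 + \frac{x^{1-s} - 1}{1-s} \;=\; O(x^{1-s}),
\]
since $1-s > 0$. For the third estimate with $s > 1$, the same monotonicity yields
\[
\sum_{n > x} n^{-s} \;\le\; \int_{\lfloor x \rfloor}^\infty t^{-s}\, dt \;=\; \frac{\lfloor x \rfloor^{1-s}}{s-1} \;=\; O(x^{1-s}).
\]
The only mild bookkeeping step is the removal of the floor in the first identity, handled by the mean value argument indicated above; no other step presents any real difficulty, which is consistent with the author's remark that these are well-known estimates.
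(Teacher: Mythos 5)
Your argument is correct: the paper states this lemma without proof, treating the three bounds as well known, so there is no in-paper argument to compare against. Your integral-comparison derivation is the standard one, and each step checks out --- the sandwich $\int_0^{\lfloor x\rfloor} t^s\,dt \le \sum_{n\le \lfloor x\rfloor} n^s \le \int_1^{\lfloor x\rfloor+1} t^s\,dt$ uses monotonicity correctly, the mean value theorem handles both the shift from $(\lfloor x\rfloor+1)^{s+1}$ to $\lfloor x\rfloor^{s+1}$ and the removal of the floor, and the two tail estimates follow from the monotone comparison $n^{-s}\le \int_{n-1}^{n} t^{-s}\,dt$ together with $1-s>0$ in the second case and $\lfloor x\rfloor^{1-s}\le 2^{s-1}x^{1-s}$ (for $x\ge 1$) in the third. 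This is exactly the utility role the lemma plays in the proof of Theorem \ref{Th_asympt}, and nothing further is needed.
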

	
	Next, we deduce the number of solutions $(x_1,\ldots,x_k)$ of some congruences and systems of congruences (mod $p$). 
	Throughout, we assume that $(x_1,\ldots,x_k)\in \{1,\ldots,p\}^k$.
	
	\begin{lemma} \label{Lemma_syst_lin_cong} 
		Consider the system of linear congruences 
		\begin{align*}
			a_{11}x_1+\cdots +a_{1k}x_k+b_1 \equiv & \ 0 \textup{ (mod $n$)} \\ \ldots & \ldots \\
			a_{k1}x_1+\cdots +a_{kk}x_k+b_k \equiv & \ 0 \textup{ (mod $n$)},
		\end{align*}
		where $a_{ij},b_i\in \Z$ \textup{($1 \le i,j\le k$)} and $n\in \N$. Let $A=(a_{ij})_{1\le i,j\le k}$ and assume that 
		$\gcd(\det(A),n)=1$. Then the system has a unique solution \textup{(mod $n$)}.
	\end{lemma}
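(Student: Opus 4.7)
The plan is to exploit the classical adjugate identity
\[
A \cdot \operatorname{adj}(A) = \det(A) \cdot I_k,
\]
which is an equality of integer matrices and therefore also holds modulo $n$. Under the hypothesis $\gcd(\det(A),n)=1$, the residue class of $\det(A)$ is a unit in $\Z/n\Z$, so $A$ itself becomes invertible over $\Z/n\Z$ with inverse $\det(A)^{-1} \operatorname{adj}(A) \pmod n$. Once invertibility is established, unique solvability is immediate.

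More concretely, I would write the system in matrix form as $A\mathbf{x} \equiv -B \pmod n$, where $\mathbf{x}=(x_1,\ldots,x_k)^T$ and $B=(b_1,\ldots,b_k)^T$. For existence, I would exhibit the candidate solution
\[
\mathbf{x} \equiv -\det(A)^{-1}\operatorname{adj}(A)\, B \pmod n
\]
and verify it by left-multiplying by $A$ and applying the adjugate identity. For uniqueness, given two solutions $\mathbf{x}',\mathbf{x}''$, their difference $\mathbf{y}=\mathbf{x}'-\mathbf{x}''$ satisfies $A\mathbf{y}\equiv 0\pmod n$; left-multiplying by $\operatorname{adj}(A)$ gives $\det(A)\mathbf{y}\equiv 0\pmod n$, and the unit property of $\det(A)$ forces $\mathbf{y}\equiv 0\pmod n$ componentwise. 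Interpreting ``solution'' as an element of $\{1,\ldots,n\}^k$ (rather than of $(\Z/n\Z)^k$) then gives exactly one representative.

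There is essentially no obstacle in this argument: the only nontrivial ingredient is the adjugate identity, which is purely formal. An alternative route would be to decompose $n$ into prime powers via the Chinese remainder theorem and argue that $A$ is invertible over each local ring $\Z/p^\nu\Z$ precisely when $p\nmid \det(A)$, but the adjugate approach works uniformly in $n$ and is shorter. I would therefore adopt the adjugate proof and keep the exposition to a few lines.
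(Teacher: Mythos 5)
Your proof is correct and complete: the adjugate identity makes $A$ invertible over $\Z/n\Z$ whenever $\gcd(\det(A),n)=1$, which immediately yields existence and uniqueness of the solution in $\{1,\ldots,n\}^k$. The paper itself gives no proof, deferring entirely to Rosen's textbook (Sect.\ 4.5), and your argument is exactly the standard one found there, so nothing further is needed.
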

	
	\begin{proof} See, e.g., the book by Rosen \cite[Sect.\ 4.5]{Ros2011}.
	\end{proof}

	\begin{lemma} \label{Lemma_cong_prod} Let $k\in \N$ and $p$ be a prime.
		The number $N_k(p)$ of solutions of the congruence
		$x_1\cdots x_k (x_1+\cdots + x_k)\equiv 0$ \textup{ (mod $p$)} 
		is
		\begin{align*}
			N_k(p)= p^k- \frac{p-1}{p}\left((p-1)^k+(-1)^{k-1}\right).
		\end{align*}
	\end{lemma}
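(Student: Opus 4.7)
My plan is to pass to the complement. The product $x_1\cdots x_k(x_1+\cdots+x_k)$ is nonzero mod $p$ exactly when each $x_i\not\equiv 0\pmod p$ \emph{and} $x_1+\cdots+x_k\not\equiv 0\pmod p$. Since $x_i\in\{1,\ldots,p\}$, the first condition is equivalent to $x_i\in\{1,\ldots,p-1\}$, giving $(p-1)^k$ tuples. Thus
\[
N_k(p) = p^k - \bigl((p-1)^k - M(p)\bigr),
\]
where $M(p)$ denotes the number of $(x_1,\ldots,x_k)\in\{1,\ldots,p-1\}^k$ with $x_1+\cdots+x_k\equiv 0\pmod p$. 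So the whole task reduces to computing $M(p)$.

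To evaluate $M(p)$, I would use additive characters on $\Z/p\Z$. Writing $e_p(y)=e^{2\pi i y/p}$ and using orthogonality,
\[
M(p)=\frac{1}{p}\sum_{t=0}^{p-1}\left(\sum_{x=1}^{p-1}e_p(tx)\right)^{\!k}.
\]
The inner sum equals $p-1$ for $t=0$ and equals $-1$ for each $t\in\{1,\ldots,p-1\}$, because $\sum_{x=0}^{p-1}e_p(tx)=0$ for $t\not\equiv0$. Hence
\[
M(p)=\frac{(p-1)^k+(p-1)(-1)^k}{p}=\frac{(p-1)\bigl((p-1)^{k-1}-(-1)^{k-1}\bigr)\cdot(-1)\cdot(-1)}{p},
\]
which I would simplify to $M(p)=\frac{(p-1)^k+(-1)^k(p-1)}{p}$. (A quick alternative derivation: by the substitution $x_i\mapsto ux_i$ with $u\not\equiv 0$, the counting function $c\mapsto\#\{x: \sum x_i\equiv c\}$ is constant on the nonzero residues; combined with $\sum_{c}=(p-1)^k$ this gives the same value.)

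Substituting back,
\[
N_k(p)=p^k-(p-1)^k+\frac{(p-1)^k+(-1)^k(p-1)}{p}
=p^k-\frac{p-1}{p}\bigl((p-1)^k+(-1)^{k-1}\bigr),
\]
after combining the two fractions over $p$ and factoring $p-1$. This is the stated formula. I do not anticipate a real obstacle: the only slightly delicate point is recognizing that the character sum $\sum_{x=1}^{p-1}e_p(tx)$ equals $-1$ rather than $0$ (because we have excluded $x=0$ from the complete residue system), and then tracking signs carefully when rewriting $(-1)^k(p-1)=-(-1)^{k-1}(p-1)$ at the last step.
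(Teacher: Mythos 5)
Your proposal is correct and follows essentially the same route as the paper: the paper also splits the count into the tuples with some $x_i\equiv 0\pmod p$ (contributing $p^k-(p-1)^k$) plus the tuples with all $x_i\not\equiv 0$ and $x_1+\cdots+x_k\equiv 0$, and evaluates the latter by exactly the same additive-character (exponential sum) computation, using $\sum_{x=1}^{p-1}e^{2\pi i tx/p}=-1$ for $t\not\equiv 0$. Your $M(p)$ is the paper's $B_k(p)$, and the final algebra matches.
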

	
	\begin{proof}[Proof of Lemma {\rm \ref{Lemma_cong_prod}}]
		To deduce the number $N_k(p)$ of solutions of the given congruence,
		we consider the following cases.
		
		Case I. One of the values $x_1,\ldots,x_k$ is $p$. The number $A_k(p)$ of these solutions is
		the difference between the total number of $k$-tuples $(x_1,\ldots,x_k)$ and the number of
		$k$-tuples where none of the values $x_1,\ldots,x_k$ is $p$, that is, $A_k(p)= p^k-(p-1)^k$.
		
		Case II. If $1\le x_1,\ldots,x_k\le p-1$, then to deduce the number $B_k(p)$ of solutions of the congruence
		$x_1+\cdots +x_k\equiv 0$ (mod $p$) we use the identity 
		\begin{align} \label{id_exp_prime}
			\sum_{1\le a\le p} e^{2\pi ija/p} =\begin{cases} p, \text{ if $p\mid j$},\\ 0, \text{ otherwise},
			\end{cases}   
		\end{align}
		where $j\in \N$, and obtain
		\begin{equation*}
			B_k(p)= \sum_{\substack{1\le x_1,\ldots,x_k\le p-1 \\ x_1+\cdots+x_k\equiv 0 \text{ (mod $p$)}}} 1 =
			\frac1{p} \sum_{1\le x_1,\ldots,x_k\le p-1} \sum_{j=1}^p e^{2\pi ij(x_1+\cdots+x_k)/p}
		\end{equation*}
		\begin{equation*}
			= \frac1{p} \sum_{j=1}^p \sum_{1\le x_1\le p-1} e^{2\pi ijx_1/p} \cdots \sum_{1\le x_k\le p-1} e^{2\pi ijx_k/p} 
		\end{equation*}
		\begin{equation*}
			= \frac1{p} \sum_{j=1}^p \left(\sum_{1\le x_1\le p} e^{2\pi ijx_1/p}-1\right) \cdots \left(\sum_{1\le x_k\le p} 
			e^{2\pi ijx_k/p}-1\right)  
		\end{equation*}
		\begin{equation*}
			= \frac1{p} \left( \sum_{j=1}^{p-1} (-1)^k + (p-1)^k\right) = \frac1{p}\left((p-1)(-1)^k+(p-1)^k \right),
		\end{equation*}
		using \eqref{id_exp_prime}, again.
		This gives 
		\begin{align*}
			N_k(p)= A_k(p)+B_k(p) & =p^k-(p-1)^k + \frac1{p}\left((p-1)(-1)^k+(p-1)^k \right) \\
			& =p^k- \frac{p-1}{p}\left((p-1)^k+(-1)^{k-1}\right),
		\end{align*} 
		finishing the proof.
	\end{proof}
	
	\begin{lemma} \label{Lemma_cong_system} Let $k\in \N$ and let $p$ be a prime. The number $N'_k(p)$ of solutions 
		of the system of congruences  
		$x_1\cdots x_k \equiv 0$ \textup{(mod $p$)}, $x_1+\cdots + x_k \equiv 0$ \textup{(mod $p$)} is
		\begin{align*}
			N'_k(p)= p^{k-1} - \frac1{p}\left((p-1)^k+(-1)^k(p-1)\right).    
		\end{align*}
	\end{lemma}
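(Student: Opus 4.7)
The plan is to reduce the problem to the sum-congruence count that already appeared in Lemma \ref{Lemma_cong_prod}. Since $p$ is prime and each $x_i$ lies in $\{1,\ldots,p\}$, the first congruence $x_1\cdots x_k \equiv 0 \pmod p$ is equivalent to at least one of the $x_i$ being equal to $p$. So $N'_k(p)$ counts exactly those $k$-tuples satisfying $x_1+\cdots+x_k\equiv 0 \pmod p$ which have at least one coordinate equal to $p$.

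I would then split by inclusion--exclusion into the total count $T_k(p)$ of $k$-tuples in $\{1,\ldots,p\}^k$ with $x_1+\cdots+x_k\equiv 0 \pmod p$, minus the count $B_k(p)$ of $k$-tuples in $\{1,\ldots,p-1\}^k$ with the same sum condition. The quantity $B_k(p)$ has already been computed in the proof of Lemma \ref{Lemma_cong_prod} as $B_k(p)=\tfrac{1}{p}\bigl((p-1)^k+(p-1)(-1)^k\bigr)$, so there is nothing to redo there.

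For $T_k(p)$, I would use either the exponential sum identity \eqref{id_exp_prime} exactly as in the previous lemma, or the simpler observation that $x_1,\ldots,x_{k-1}$ may be chosen arbitrarily in $\{1,\ldots,p\}$ and then $x_k$ is uniquely determined modulo $p$, with a unique representative in $\{1,\ldots,p\}$; either argument gives $T_k(p)=p^{k-1}$.

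Combining,
\begin{equation*}
N'_k(p) = T_k(p) - B_k(p) = p^{k-1} - \frac{1}{p}\bigl((p-1)^k + (-1)^k(p-1)\bigr),
\end{equation*}
which is the stated formula. There is no substantial obstacle here; the work has essentially been done in Lemma \ref{Lemma_cong_prod}, and the only new ingredient is the easy identification of the product-congruence with ``some coordinate equals $p$'' together with the trivial evaluation $T_k(p)=p^{k-1}$.
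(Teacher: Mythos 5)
Your proposal is correct and is essentially identical to the paper's own (first) argument: identify the product condition with ``some coordinate equals $p$,'' subtract the count $B_k(p)$ from Lemma \ref{Lemma_cong_prod} from the total $p^{k-1}$ of solutions of the sum congruence. The paper additionally sketches an alternative inclusion--exclusion computation, but that is only a second route to the same formula.
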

	
	\begin{proof}[Proof of Lemma {\rm \ref{Lemma_cong_system}}]
		Here $N'_k(p)$ is exactly the number of solutions of the congruence $x_1+\cdots + x_k \equiv 0$ (mod $p$), where at least one of $x_1,\ldots,x_k$ is $p$. This is the difference between the number of all solutions of this congruence and the number $C_k(p)$ of solutions, where none of $x_1,\ldots,x_k$ is $p$.
		The number of all solutions is $p^{k-1}$, since $x_1,\ldots,x_{k-1}$ can be selected arbitrarily and $x_k\equiv -(x_1+\cdots +x_k)$ (mod $p$). The value $C_k(p)$ has been found above in the proof of Lemma \ref{Lemma_cong_prod}, and is $C_k(p)=B_k(p)= \frac1{p}((p-1)^k+(-1)^k(p-1))$. This gives 
		\begin{align*}
			N'_k(p)= p^{k-1} - \frac1{p}\left((p-1)^k+(-1)^k(p-1)\right).    
		\end{align*}
		
		Alternatively, to find $N'_k(p)$, we can use the inclusion-exclusion principle.
		Let $A_i:=\{(x_1,\ldots,x_k)\in \{1,\ldots,p\}^k: p\mid x_i \text{ and } p\mid (x_1+\cdots +x_k)\}$ ($1\le i \le k$). Then $|A_i|=p^{k-2}$ ($1\le i\le k$), since we need $x_i=p$ and all but one of the variables $x_j$ with $j\ne i$ can be arbitrarily selected. Also, $|A_i\cap A_j|= p^{k-3}$ ($1\le i<j\le k$), $|A_i\cap A_j\cap A_{\ell}|=p^{k-4}$ ($1\le i<j<\ell\le k$), ..., $|\cap_{1\le i\le k} A_i|=1$. 
		We obtain 
		\begin{align*}
			N'_k(p) & = \sum_{1\le i\le k} |A_i| - \sum_{1\le i< j\le k} |A_i\cap A_j| + \sum_{1\le i< j< \ell\le k} |A_i\cap A_j\cap A_{\ell}|  - \cdots +(-1)^{k-1} |\cap_{1\le i\le k} A_i|  \\
			& = \binom{k}{1}p^{k-2} - \binom{k}{2}p^{k-3}+ \binom{k}{3}p^{k-4}- \cdots +(-1)^{k-1} \binom{k}{k} \\
			& = p^{k-1} - \frac1{p}\left((p-1)^{k}+(-1)^k (p-1)\right),
		\end{align*}
		and the proof is complete.
	\end{proof}
	
	\begin{lemma} \label{Lemma_cong_system_prime_power} Let $p^a$ be a prime power \textup{($a \ge 1 $)}. The number $N'_2(p^a)$ of solutions $(x_1,x_2)\in \{1,\ldots,p^a\}^2$ of the system of congruences  
		$x_1x_2 \equiv 0$ \textup{(mod $p^a$)}, $x_1+x_2\equiv 0$ \textup{(mod $p^a$)} is
		\begin{align*}
			N'_2(p^a)= p^{\lfloor a/2 \rfloor}.    
		\end{align*}
	\end{lemma}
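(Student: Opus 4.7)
The plan is to use the additive congruence to eliminate one variable and reduce to a single-variable divisibility count. Given $x_1 \in \{1,\ldots,p^a\}$, the congruence $x_1+x_2\equiv 0\pmod{p^a}$ uniquely determines $x_2 \in \{1,\ldots,p^a\}$ (namely $x_2 = p^a - x_1$ if $x_1 < p^a$, and $x_2 = p^a$ if $x_1 = p^a$). So $N'_2(p^a)$ equals the number of $x_1 \in \{1,\ldots,p^a\}$ for which the product condition $p^a \mid x_1 x_2$ is additionally satisfied.

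Substituting $x_2 \equiv -x_1 \pmod{p^a}$ into the product gives $x_1 x_2 \equiv -x_1^2 \pmod{p^a}$, so the product condition is equivalent to $p^a \mid x_1^2$. Writing $v_p$ for the $p$-adic valuation, this reads $2\,v_p(x_1) \ge a$, that is, $v_p(x_1) \ge \lceil a/2\rceil$, which is the same as $p^{\lceil a/2\rceil} \mid x_1$.

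The final step is to count the multiples of $p^{\lceil a/2\rceil}$ in $\{1,\ldots,p^a\}$: there are exactly $p^a/p^{\lceil a/2\rceil} = p^{a-\lceil a/2\rceil} = p^{\lfloor a/2\rfloor}$ of them, using the elementary identity $a - \lceil a/2\rceil = \lfloor a/2\rfloor$. This yields $N'_2(p^a)=p^{\lfloor a/2\rfloor}$.

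There is no real obstacle here; the argument is essentially a two-line reduction, and the only thing to watch is to handle the representatives $\{1,\ldots,p^a\}$ (rather than $\{0,\ldots,p^a-1\}$) consistently, which is harmless since we are counting residue classes modulo $p^a$.
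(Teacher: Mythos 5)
Your proof is correct, and it takes a genuinely different (and shorter) route than the paper. You eliminate $x_2$ first: the linear congruence forces $x_2\equiv -x_1 \pmod{p^a}$, so the product condition collapses to $p^a\mid x_1^2$, i.e.\ $v_p(x_1)\ge \lceil a/2\rceil$, and the count is the number of multiples of $p^{\lceil a/2\rceil}$ in $\{1,\ldots,p^a\}$, namely $p^{\lfloor a/2\rfloor}$. The paper instead works from the product condition outward: it stratifies by $b=v_p(x_1)$, writes $x_1=p^by_1$ with $p\nmid y_1$, deduces $p^{a-b}\mid x_2$, and then solves the resulting linear congruence for $y_2$, which is solvable precisely when $b\ge a/2$; the total is the telescoping sum $\sum_{b=\lceil a/2\rceil}^{a}\varphi(p^{a-b})=p^{\lfloor a/2\rfloor}$. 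Your version buys brevity and avoids both the valuation case analysis and the totient sum; the paper's version has the mild advantage of exhibiting, for each admissible valuation of $x_1$, exactly how many solutions arise, which mirrors the style of its other counting lemmas. Either way the substitution $x_2\equiv -x_1$ is the same underlying mechanism, just deployed at a different point in the argument.
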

	
	\begin{proof}[Proof of Lemma {\rm \ref{Lemma_cong_system_prime_power}}]
		Assume that $p^a \mid x_1x_2$ and $p^a \mid x_1+x_2$, where $1\le x_1,x_2\le p^a$. Let $p^b\mid \mid x_1$, where
		$0\le b\le a$. Then $x_1=p^by_1$ with $1\le y_1\le p^{a-b}$, $p\nmid y_1$, giving $\varphi(p^{a-b})$ choices of $y_1$
		for a fixed $b$. We deduce that $p^{a-b}\mid y_1x_2$, hence $p^{a-b}\mid x_2$ and $x_2=p^{a-b}y_2$, where $1\le y_2\le p^b$.
		
		From condition $p^a \mid x_1+x_2$ we have $p^{a-b}y_2\equiv - p^by_1$ (mod $p^a$). This is a linear congruence in $y_2$
		for every fixed $y_1$, it has solutions if and only if $\gcd(p^{a-b},p^a)=p^{a-b}\mid p^by_1$, that is $a-b\le b$, so $b\ge a/2$ (since $p\nmid y_1$). If this condition is satisfied, then there is one solution (mod $p^b$), namely $y_2 \equiv - p^{2b-a} y_1$ (mod $p^b$).
		
		The number of solutions of the system is 
		\begin{equation*}
			\sum_{b=\lceil a/2\rceil}^a \varphi(p^{a-b})= \sum_{c=0}^{\lfloor a/2 \rfloor} \varphi(p^c)= p^{\lfloor a/2 \rfloor}, 
		\end{equation*}
		ending the proof.
	\end{proof}
	
	\begin{lemma} \label{Lemma_linear}
		Let $a_1,\ldots,a_k,b\in \Z$ and $n\in \N$. The congruence $a_1x_1+\cdots +a_kx_k\equiv b$ \textup{(mod $n$)} has a solution 
		if and only if $\delta \mid b$, where $\delta =\gcd(a_1,\ldots,a_k,n)$. In this case, the number of solutions is $N_k(n)= \delta n^{k-1}$. 
	\end{lemma}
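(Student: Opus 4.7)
The plan is to interpret the solution set of the congruence as the preimage of a point under a group homomorphism. Define
\[
\phi: (\Z/n\Z)^k \to \Z/n\Z, \qquad (\overline{x_1},\ldots,\overline{x_k}) \mapsto \overline{a_1x_1+\cdots +a_kx_k}.
\]
Under the natural bijection $\{1,\ldots,n\}^k \leftrightarrow (\Z/n\Z)^k$, the solutions of $a_1x_1+\cdots+a_kx_k\equiv b$ (mod $n$) correspond exactly to the fiber $\phi^{-1}(\overline{b})$. Hence the congruence is solvable iff $\overline{b}\in \operatorname{Im}\phi$, and whenever this fiber is nonempty it is a coset of $\ker\phi$, so its cardinality equals $|\ker\phi|$.

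The key step is identifying $\operatorname{Im}\phi$. Being a subgroup of the cyclic group $\Z/n\Z$, it coincides with $d(\Z/n\Z)$, where $d$ is the positive generator of the subgroup $a_1\Z+\cdots+a_k\Z+n\Z$ of $\Z$. By B\'ezout's identity this generator is precisely $\delta=\gcd(a_1,\ldots,a_k,n)$. Consequently $\overline{b}\in \operatorname{Im}\phi$ iff $\delta\mid b$ (the stated solvability criterion), and $|\operatorname{Im}\phi|=n/\delta$.

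It remains to compute $|\ker\phi|$. The first isomorphism theorem gives
\[
|\ker\phi|=\frac{|(\Z/n\Z)^k|}{|\operatorname{Im}\phi|}=\frac{n^k}{n/\delta}=\delta\, n^{k-1},
\]
so whenever $\delta\mid b$ the congruence has exactly $N_k(n)=\delta\, n^{k-1}$ solutions, as asserted.

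I expect no serious obstacle; the only nontrivial ingredient is the B\'ezout identification of $\operatorname{Im}\phi$, which is standard. A purely elementary alternative is induction on $k$: fix $x_k\in\{1,\ldots,n\}$, apply the $(k-1)$-variable version to $a_1x_1+\cdots+a_{k-1}x_{k-1}\equiv b-a_kx_k$ (mod $n$) with $\delta'=\gcd(a_1,\ldots,a_{k-1},n)$, and count the admissible $x_k$ via the $k=1$ case, using the associativity $\gcd(a_k,\delta')=\delta$; this route works but is longer than the group-theoretic one above.
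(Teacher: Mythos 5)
Your proof is correct, but it takes a genuinely different route from the paper. The paper evaluates $N_k(n)$ by orthogonality of additive characters: it writes the indicator of the congruence as $\frac1n\sum_{j=1}^n e^{2\pi i j(a_1x_1+\cdots+a_kx_k-b)/n}$, sums over the $x_i$ to kill all terms except those with $n\mid j\gcd(a_1,\ldots,a_k)$, and then evaluates the surviving geometric sum over $j=\frac{n}{\delta}\ell$ to get $\delta n^{k-1}$ when $\delta\mid b$ and $0$ otherwise. You instead view the solution set as the fiber $\phi^{-1}(\overline{b})$ of the group homomorphism $\phi:(\Z/n\Z)^k\to\Z/n\Z$, identify $\operatorname{Im}\phi$ as the subgroup generated by $\overline{\delta}$ via B\'ezout, and apply the first isomorphism theorem plus the coset structure of fibers. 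Your argument is shorter and conceptually cleaner, and it makes transparent \emph{why} all nonempty fibers have the same size (they are cosets of $\ker\phi$) — something the exponential-sum computation obtains only implicitly. What the paper's method buys in exchange is uniformity: the same character-sum template is reused verbatim for the nonlinear counts in Lemmas \ref{Lemma_cong_prod}, \ref{Lemma_quadratic} and \ref{Lemma_syst_cong_quadr}, where no homomorphism is available, whereas your argument is specific to the linear case. Your sketched inductive alternative is essentially the Lehmer/McCarthy proof the paper cites.
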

	
	\begin{proof}[Proof of Lemma {\rm \ref{Lemma_linear}}]
		See, e.g., Lehmer \cite[pp. 155--156]{Leh1913}, McCarthy \cite[Prop.\ 3.1]{McC1986}, proved by induction on $k$. 
		For the sake of completeness, here we give a direct proof using exponential sums. 
		We have by using the familiar identity 
		\begin{equation} \label{id_exp}
			\sum_{1\le a\le n} e^{2\pi ija/n} =\begin{cases} n, \text{ if $n\mid j$},\\ 0, \text{ otherwise},  
			\end{cases}    
		\end{equation}
		valid for $n,j\in \N$, 
		\begin{equation*}
			N_k(n)= \sum_{\substack{1\le x_1,\ldots,x_k\le n \\ a_1x_1+\cdots+a_kx_k\equiv b 
					\text{ (mod $n$)}}} 1=
			\frac1{n} \sum_{1\le x_1,\ldots,x_k\le n} \sum_{j=1}^n e^{2\pi ij(a_1x_1+\cdots+a_kx_k-b)/n}
		\end{equation*}
		\begin{equation*}
			=\frac1{n} \sum_{j=1}^n e^{-2\pi ijb/n} \sum_{1\le x_1\le n} e^{2\pi ij a_1x_1/n} \cdots \sum_{1\le x_k\le n} e^{2\pi ij a_kx_k/n}
		\end{equation*}
		\begin{equation*}
			=n^{k-1} \sum_{\substack{j=1\\ n\mid ja_1,\ldots, n\mid ja_k}}^n e^{-2\pi ijb/n}  = 
			n^{k-1} \sum_{\substack{j=1\\ n\mid j\gcd(a_1,\ldots,a_k)}}^n e^{-2\pi ijb/n},
		\end{equation*}
		by \eqref{id_exp} again. Let $\delta= \gcd(a_1,\ldots,a_k,n)$. Condition $n\mid j\gcd(a_1,\ldots,a_k)$ is
		equivalent to $\frac{n}{\delta} \mid j \frac{\gcd(a_1,\ldots,a_k)}{\delta}$ and to $\frac{n}{\delta} \mid j$, since $\frac{n}{\delta}$ and $\frac{\gcd(a_1,\ldots,a_k)}{\delta}$ are relatively prime. Let $j=\frac{n}{\delta}\ell\le n$, where $\ell\le \delta$. We deduce that
		\begin{equation*}
			N_k(n)= n^{k-1} \sum_{\ell=1}^{\delta} e^{-2\pi i\ell b/\delta}  = \begin{cases} \delta n^{k-1}, & \text{ if $\delta \mid b$},\\ 0, & \text{ otherwise},\end{cases}
		\end{equation*}
		completing the proof.
	\end{proof}
	
	\begin{lemma} \label{Lemma_quadratic}
		Let $k\in \N$, $\a=(a_1,\ldots,a_k)\in \Z^k$ and $p$ be a prime such that $\gcd(a_1\cdots a_k,p)=1$. If $p>2$, then the number of solutions of the congruence $a_1x_1^2+\cdots + a_kx_k^2\equiv 0$ \textup{(mod $p$)} is 
		\begin{align*}
			p^{k-1} \left(1 + \frac{p-1}{p^{k/2}} (-1)^{k(p-1)/4} \left(\frac{a_1 \cdots a_k}{p} \right) \right),
		\end{align*}
		if $k$ is even, where $\left( \frac{a}{p} \right)$ is the Legendre symbol, and $p^{k-1}$ if $k$ is odd. If $p=2$, then the number of solutions is $2^{k-1}$ for every $k\in \N$.
	\end{lemma}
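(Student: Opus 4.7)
The plan is to split on whether $p = 2$ or $p$ is odd. The case $p=2$ is immediate: the hypothesis $\gcd(a_1\cdots a_k,2)=1$ forces every $a_i$ to be odd, so $a_ix_i^2 \equiv x_i \pmod{2}$, and the congruence collapses to $x_1+\cdots+x_k\equiv 0 \pmod{2}$. Exactly half of the $2^k$ tuples in $\{1,2\}^k$ satisfy this parity condition, giving $2^{k-1}$.

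For odd $p$, I would mimic the exponential-sum computations already used in Lemmas \ref{Lemma_cong_prod} and \ref{Lemma_linear}. Applying the orthogonality identity \eqref{id_exp_prime} with $n=p$ gives
\begin{equation*}
N = \frac{1}{p}\sum_{j=1}^{p} \prod_{r=1}^{k} \sum_{x_r=1}^{p} e^{2\pi i j a_r x_r^2/p}.
\end{equation*}
The term $j=p$ contributes $p^{k-1}$. For $1\le j\le p-1$, since $\gcd(a_r,p)=1$ each inner factor is a quadratic Gauss-type sum $S(ja_r):=\sum_{x=1}^{p}e^{2\pi i j a_r x^2/p}$ with $p\nmid ja_r$, and I would invoke the classical evaluation $S(c)=\left(\frac{c}{p}\right)g(p)$ where $g(p):=\sum_{x=1}^{p}e^{2\pi i x^2/p}$. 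Pulling out the Legendre symbols of the $a_r$'s then reduces the contribution of the $j\ne p$ terms to
\begin{equation*}
\frac{g(p)^k}{p}\left(\frac{a_1}{p}\right)\cdots\left(\frac{a_k}{p}\right)\sum_{j=1}^{p-1}\left(\frac{j}{p}\right)^{k}.
\end{equation*}

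The rest is a parity case split. When $k$ is odd, $\sum_{j=1}^{p-1}\left(\frac{j}{p}\right)^{k}=\sum_{j=1}^{p-1}\left(\frac{j}{p}\right)=0$ and so $N=p^{k-1}$. When $k$ is even, $\left(\frac{j}{p}\right)^{k}=1$ for $j\not\equiv 0 \pmod p$ so the character sum equals $p-1$, and using $g(p)^2=\left(\frac{-1}{p}\right)p$ one obtains $g(p)^{k}=\left(\frac{-1}{p}\right)^{k/2}p^{k/2}=(-1)^{k(p-1)/4}p^{k/2}$. Substituting and factoring out $p^{k-1}$ yields the claimed identity.

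The only nontrivial ingredients are $S(c)=\left(\frac{c}{p}\right)g(p)$ and $g(p)^{2}=\left(\frac{-1}{p}\right)p$, both classical. I expect this pair of facts to be the main obstacle in the sense that they must be cited or proved; everything else is bookkeeping. For completeness I would give the short derivation of the first: the substitution $x\mapsto ux$ on $\Z/p\Z$ shows $S(c)=S(cu^{2})$, so $S$ depends only on the square class of $c$ and takes just two values $S(1)=g(p)$ and $S(a_{0})$ for a fixed nonresidue $a_{0}$; the orthogonality computation $\sum_{c=1}^{p-1}S(c)=0$ then forces $S(a_{0})=-g(p)$.
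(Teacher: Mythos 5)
Your proposal is correct and follows essentially the same route as the paper: orthogonality of additive characters reduces the count to a product of quadratic Gauss sums, and the parity of $k$ together with $\sum_{j=1}^{p-1}\left(\frac{j}{p}\right)^k$ being $0$ or $p-1$ gives the two cases. The only (minor) difference is that the paper invokes the full evaluation $S(j,p)=c_p\left(\frac{j}{p}\right)\sqrt{p}$ with the sign constant $c_p$, whereas you need only the elementary facts $S(c)=\left(\frac{c}{p}\right)g(p)$ and $g(p)^2=\left(\frac{-1}{p}\right)p$, which suffices because the Gauss sum only enters through an even power.
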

	
	\begin{proof}[Proof of Lemma {\rm \ref{Lemma_quadratic}}] Formulas for the number of quadratic congruences are known in the literature. See Section \ref{Sect_Further_remarks} for some more details. However, we prefer to give here a direct proof.
		
		If $p=2$ and $a_1,\ldots,a_k$ are odd, then it directly follows that the number of solutions of the congruence $a_1x_1^2+\cdots+ a_kx_k^2\equiv 0$ (mod $2$) is $2^{k-1}$ for every $k$ (the values $x_1,\ldots, x_{k-1}\in \{1,2\}$
		can be arbitrarily selected and then for $x_k$ we have only one choice). 
		
		Now assume that $p>2$. Then the number $U_k(p)$ of solutions of the given congruence is
		\begin{equation*}
			U_k(p)= \sum_{\substack{1\le x_1,\ldots,x_k\le p \\ a_1x_1^2+\cdots+a_kx_k^2\equiv 0 \text{ (mod $p$)}}} 1=
			\frac1{p} \sum_{1\le x_1,\ldots,x_k\le p} \sum_{j=1}^p e^{2\pi ij(a_1x_1^2+\cdots+a_kx_k^2)/p}
		\end{equation*}
		\begin{equation*}
			= \frac1{p} \sum_{j=1}^p \sum_{1\le x_1\le p} e^{2\pi ija_1x_1^2/p} \cdots \sum_{1\le x_k\le p} e^{2\pi ija_kx_k^2/p} 
		\end{equation*}
		\begin{equation*}
			= \frac1{p} \left(\sum_{j=1}^{p-1} S(ja_1,p)\cdots S(ja_k,p) + p^k\right),  
		\end{equation*}
		where
		\begin{align*}
			S(j,p):= \sum_{x=1}^p  e^{2\pi ijx^2/p} = c_p \left(\frac{j}{p} \right)\sqrt{p},
		\end{align*}
		are the quadratic Gauss sums for the primes $p>2$ with $p\nmid j$, where $c_p=1$ for $p\equiv 1$ (mod $4$), 
		and $c_p=i$ for $p\equiv -1$ (mod $4$); see, e.g., Berndt et al. \cite[Ch.\ 1]{BEW1998}, Hua \cite[Sect.\ 7.5]{Hua1982}.
		This gives
		\begin{align*}
			U_k(p) =  \frac1{p} \left(\sum_{j=1}^{p-1} c_p^k \left(\frac{j}{p} \right)^k 
			\left(\frac{a_1\cdots a_k}{p} \right)(\sqrt{p})^k + p^k\right),  
		\end{align*}
		\begin{align*}
			=  \frac1{p}\left( c_p^k \left(\frac{a_1\cdots a_k}{p} \right)(\sqrt{p})^k \sum_{j=1}^{p-1} \left(\frac{j}{p} \right)^k  + p^k\right).  
		\end{align*}
		
		Here 
		\begin{align} \label{sum_Legendre}
			\sum_{j=1}^{p-1}  \left(\frac{j}{p} \right)^k = \begin{cases} \sum_{j=1}^{p-1}  1 = p-1, & \text{ if $k$ is even},\\
				\sum_{j=1}^{p-1} \left(\frac{j}{p} \right) =0, & \text{ if $k$ is odd}, \end{cases}
		\end{align}
		and $c_p^k= (-1)^{k(p-1)/4}$ for $k$ even. This gives the result.
	\end{proof}
	
	\begin{lemma} \label{Lemma_syst_cong_quadr} Let $k\in \N$ and let $p$ be a prime. Let $T_k(p)$ denote the number of solutions 
		of the system of congruences $x_1\cdots x_k\equiv 0$ \textup{(mod $p$)}, $x_1^2+\cdots + x_k^2\equiv 0$ \textup{(mod $p$)}. If $p>2$, then
		\begin{align} \label{T_k_p}
			T_k(p)= \frac1{p} \left(p^k - (p-1)^k + (-1)^{k-1}(p-1) \sum_{s=0}^{\lfloor(k-1)/2\rfloor} (-1)^{s(p-1)/2} \binom{k}{2s} p^s \right).
		\end{align}
		If $p=2$, then $T_k(2)=2^{k-1}$ for $k$ odd and $T_k(2)=2^{k-1}-1$ for $k$ even.
	\end{lemma}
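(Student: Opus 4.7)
The key observation is that, since $x_1,\ldots,x_k\in\{1,\ldots,p\}$, the congruence $x_1\cdots x_k\equiv 0$ (mod $p$) is equivalent to demanding that at least one $x_i$ equals $p$. Hence
\begin{equation*}
T_k(p) = U_k(p) - V_k(p),
\end{equation*}
where $U_k(p)$ counts all solutions of $x_1^2+\cdots+x_k^2\equiv 0$ (mod $p$), given by Lemma \ref{Lemma_quadratic} with $a_1=\cdots=a_k=1$, and $V_k(p)$ counts the solutions with every $x_i\in\{1,\ldots,p-1\}$. The plan is to compute $V_k(p)$ by exactly mimicking the Gauss-sum argument used in Lemma \ref{Lemma_quadratic}, and then verify that subtraction gives the stated formula.

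For $p>2$, I would write $V_k(p)=\frac{1}{p}\sum_{j=1}^{p}\prod_{i=1}^{k}\sum_{x_i=1}^{p-1} e^{2\pi ijx_i^2/p}$. The term $j=p$ contributes $(p-1)^k/p$. For $1\le j\le p-1$, the inner sum equals $S(j,p)-1$ since the $x=p$ contribution to the full Gauss sum $S(j,p)$ equals $1$. I expand $(S(j,p)-1)^k$ by the binomial theorem, substitute $S(j,p)^s = c_p^s\bigl(\tfrac{j}{p}\bigr)^s p^{s/2}$, and use \eqref{sum_Legendre} so that only even exponents $s=2t$ survive after summing over $j$. Using $c_p^{2t}=(-1)^{t(p-1)/2}$, this yields
\begin{equation*}
V_k(p) = \frac{1}{p}\left((p-1)^k + (p-1)(-1)^k \sum_{t=0}^{\lfloor k/2\rfloor} \binom{k}{2t} (-1)^{t(p-1)/2} p^t\right).
\end{equation*}

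Now I subtract $V_k(p)$ from $U_k(p)$. For $k$ odd, Lemma \ref{Lemma_quadratic} gives $U_k(p)=p^{k-1}$, and the sum in $V_k(p)$ already runs up to $\lfloor k/2\rfloor=(k-1)/2=\lfloor(k-1)/2\rfloor$, so the claimed identity \eqref{T_k_p} drops out after rewriting $-(-1)^k=(-1)^{k-1}$. For $k$ even, $U_k(p)=p^{k-1}+(p-1)p^{k/2-1}(-1)^{k(p-1)/4}$; I isolate the $t=k/2$ term in the sum defining $V_k(p)$, which evaluates to $(p-1)p^{k/2-1}(-1)^{(k/2)(p-1)/2}$, and observe that it cancels precisely against the correction term in $U_k(p)$ (this is where $c_p^2=(-1)^{(p-1)/2}$ makes the exponents match). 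The remaining sum then runs from $t=0$ to $\lfloor(k-1)/2\rfloor=k/2-1$, completing \eqref{T_k_p} in the even case as well.

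The case $p=2$ I would handle separately by a direct parity count: since $x_i^2\equiv x_i$ (mod $2$), the second congruence says the number of $x_i$ equal to $1$ is even, and the first says at least one $x_i$ equals $2$. Summing $\binom{k}{s}$ over even $s\in\{0,\ldots,k\}$ gives $2^{k-1}$; subtracting the all-ones configuration (only possible when $k$ is even) yields $2^{k-1}$ for $k$ odd and $2^{k-1}-1$ for $k$ even. The main technical obstacle is the bookkeeping in the $k$-even case of the $p>2$ computation: one must carefully verify that the $t=k/2$ term of the binomial expansion of $V_k(p)$ is exactly the Gauss-sum correction in $U_k(p)$, so that truncation to $t\le\lfloor(k-1)/2\rfloor$ occurs, and that the resulting sign $(-1)^{k-1}$ agrees with the statement.
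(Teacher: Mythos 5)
Your proposal is correct and follows essentially the same route as the paper: both write $T_k(p)$ as the number of solutions of $x_1^2+\cdots+x_k^2\equiv 0\pmod p$ minus those with all $x_i\in\{1,\ldots,p-1\}$, evaluate the restricted count via the quadratic Gauss sums $S(j,p)-1$ and the binomial theorem with \eqref{sum_Legendre}, and treat $p=2$ by a direct parity count. The only organizational difference is that the paper expands $\sum_j\bigl(S(j,p)^k-(S(j,p)-1)^k\bigr)$ directly, so the sum truncates at $s\le\lfloor(k-1)/2\rfloor$ without a parity case split, whereas you substitute the closed form of Lemma \ref{Lemma_quadratic} and verify (correctly) that the $t=k/2$ term cancels the Gauss correction when $k$ is even.
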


	\begin{proof}[Proof of Lemma {\rm \ref{Lemma_syst_cong_quadr}}]
		Here $T_k(p)$ is the difference between the number of all solutions of the congruence
		$x_1^2+\cdots + x_k^2\equiv 0$ (mod $p$) and the number of its solutions, where $x_1,\ldots,x_k\in \{1,\ldots,p-1\}$.
		If $p>2$, then we obtain $T_k(p)=V_k(p)-V'_k(p)$, where
		\begin{align*}
			V_k(p)= \sum_{\substack{1\le x_1,\ldots,x_k\le p \\ x_1^2+\cdots+x_k^2\equiv 0 \text{ (mod $p$)}}} 1= 
			\frac1{p} \left(\sum_{j=1}^{p-1} (S(j,p))^k + p^k\right),  
		\end{align*}
		as already included in the proof of Lemma \ref{Lemma_quadratic} (taking $a_1=\cdots =a_k=1$), and similarly,
		\begin{align*}
			V'_k(p)= \sum_{\substack{1\le x_1,\ldots,x_k\le p-1 \\ x_1^2+\cdots+x_k^2\equiv 0 \text{ (mod $p$)}}} 1 
			= \frac1{p} \left(\sum_{j=1}^{p-1} (S(j,p)-1)^k + (p-1)^k\right). 
		\end{align*}
		
		Thus
		\begin{align*}
			V_k(p)- V'_k(p) = \frac1{p} U_k(p) + \frac1{p}\left(p^k - (p-1)^k \right),
		\end{align*} 
		where
		\begin{align*}
			U_k(p) & = \sum_{j=1}^{p-1} \left(S(j,p))^k - (S(j,p)-1)^k\right) \\ 
			& = (-1)^{k-1} \sum_{j=1}^{p-1} \sum_{t=0}^{k-1} (-1)^t \binom{k}{t} (S(j,p))^t \\
			& = (-1)^{k-1} \sum_{j=1}^{p-1} \sum_{t=0}^{k-1} (-1)^t \binom{k}{t} \left(c_p \left(\frac{j}{p} \right) 
			\sqrt{p}\right)^t \\
			& =(-1)^{k-1} \sum_{t=0}^{k-1} (-c_p)^t \binom{k}{t} \sqrt{p}^t \sum_{j=1}^{p-1}  \left(\frac{j}{p} \right)^t.
		\end{align*}
		
		Using identity \eqref{sum_Legendre} we deduce that
		\begin{align*}
			U_k(p)= (-1)^{k-1}(p-1) \sum_{s=0}^{\lfloor(k-1)/2\rfloor} (-1)^{s(p-1)/2} \binom{k}{2s} p^s,
		\end{align*}
		giving \eqref{T_k_p}.
		
		Assume that $p=2$. Then $(1,\ldots,1)$ is a solution of $x_1^2+\cdots+x_k^2\equiv 0$ (mod $2$) if and only if $k$ is even. 
		If $k$ is odd, then for an arbitrary choice of $x_1,\ldots,x_{k-1}\in \{1,2\}$, the value of $x_k$ is determined to satisfy this
		congruence, giving $2^{k-1}$ solutions of the system. If $k$ is even, then the same is true, but $(1,\ldots,1)$ is not a solution of the system, so we have $2^{k-1}-1$ solutions. 
	\end{proof}
	
	\section{Proofs of the theorems} \label{Section_Proofs}
	
	\begin{proof}[Proof of Theorem {\rm \ref{Th_main}}]
		We use the following basic property of the M\"obius function:
		\begin{align*}
			\sum_{d\mid n} \mu(d)= \begin{cases}  1, & \text{ if $n=1$}, \\
				0, & \text{ if $n>1$}.
			\end{cases}
		\end{align*}
		
		We have from the definition of $\Phi_F(n)$, 
		\begin{align*}
			\Phi_F(n) & = \sum_{1\le x_1,\ldots,x_k\le n} \sum_{d\mid \gcd(f_1(x_1,\ldots, x_k), \ldots, f_m(x_1,\ldots,x_k),n)}  \mu(d) \\
			& = \sum_{1\le x_1,\ldots,x_k\le n} \sum_{\substack{d\mid n\\ d\mid f_i(x_1,\ldots, x_k), 1\le i\le m}} \mu(d) \\
			& = \sum_{d\mid n} \mu(d) \sum_{\substack{1\le x_1,\ldots,x_k\le n \\  f_i(x_1,\ldots, x_k)\equiv 0 \text{ (mod $d$)}, 1\le i\le m}} 1. 
		\end{align*}
		
		Here the inner sum is $N_F(d) \left(\frac{n}{d}\right)^k$. Indeed, let $(x_1^0,\ldots,x_k^0)\in \{1,\ldots, d\}^k$ be a solution 
		of the system of congruences $f_i(x_1,\ldots, x_k)\equiv 0 \text{ (mod $d$)}$ ($1\le i\le m$). Then for every $(j_1,\ldots,j_k)\in \{0,\ldots,n/d-1\}^k$ the $k$-tuples $(x_1^0+j_1d,\ldots,x_k^0+j_kd)\in \{1,\ldots, n\}^k$ are solutions of the system of congruences $f_i(x_1,\ldots, x_k)\equiv 0 \text{ (mod $n$)}$ ($1\le i\le m$). Conversely, every solution of the system of congruences (mod $n$) can be obtained from the system (mod $d$), where $d\mid n$.
		
		This gives identity \eqref{eq_1}. Since the function $N_F(n)$ is multiplicative, we deduce that $\Phi_F(n)$ is also 
		multiplicative, being the convolution of multiplicative functions, and this  property immediately leads to \eqref{eq_2}.
	\end{proof}
	
	\begin{proof}[Proof of Theorem {\rm \ref{Th_main_h}}] 
		We use that $h(n)=\sum_{d\mid n} (\mu*h)(d)$ for every $n\in \N$. Similarly to the proof of Theorem \ref{Th_main}, 
		\begin{align*}
			P_{F,h}(n) &  = \sum_{1\le x_1,\ldots,x_k\le n} \ \sum_{d\mid \gcd(f_1(x_1,\ldots, x_k), \ldots, f_m(x_1,\ldots,x_k),n)}  (\mu*h)(d) \\
			& = \sum_{1\le x_1,\ldots,x_k\le n} \sum_{\substack{d\mid n\\ d\mid f_i(x_1,\ldots, x_k), 1\le i\le m}} (\mu*h)(d) \\
			& = \sum_{d\mid n} (\mu*h)(d) \sum_{\substack{1\le x_1,\ldots,x_k\le n \\  f_i(x_1,\ldots, x_k)\equiv 0 \text{ (mod $d$)}, 1\le i\le m}} 1 \\
			& = \sum_{d\mid n} (\mu*h)(d) N_F(d) \left(\frac{n}{d}\right)^k, 
		\end{align*}
		leading to identity \eqref{form_h}.
		
		If the function $h$ is multiplicative, then $P_{F,h}(n)$ is also multiplicative, being the convolution of multiplicative functions.
	\end{proof}
	
	\begin{proof}[Proof of Theorem {\rm \ref{Th_asympt}}] 
		According to \eqref{eq_1} and the first estimate of Lemma \ref{Lemma_sum} we have 
		\begin{equation*}
			\sum_{n \leq x} \Phi_F(n) = \sum_{de=n\leq x} \mu(d) N_F(d) e^k = 
			\sum_{d \leq x} \mu(d) N_F(d) \sum_{e \leq x/d} e^k
		\end{equation*}
		\begin{equation*}
			= \sum_{d \leq x} \mu(d) N_F(d) \left( \frac{1}{k+1} \left(\frac{x}{d}\right)^{k+1} + O\left(\left(\frac{x}{d}\right)^k\right) \right)
		\end{equation*}
		\begin{equation*}
			= \frac{x^{k+1}}{k+1} \sum_{d \leq x} \frac{\mu(d) N_F(d)}{d^{k+1}} + O\left(x^k \sum_{d \leq x} \frac{N_F(d)}{d^k}\right)
		\end{equation*}
		\begin{equation*}
			= \frac{x^{k+1}}{k+1} \sum_{d=1}^\infty \frac{\mu(d) N_F(d)}{d^{k+1}} + O\left(x^{k+1} \sum_{d > x} \frac{N_F(d)}{d^{k+1}}\right) + O\left(x^k \sum_{d \leq x} \frac{N_F(d)}{d^k}\right),
		\end{equation*}
		where the series is absolutely convergent, since by Lemma \ref{Lemma_Hua_kov} we have 
		\begin{equation*}
			\frac{|\mu(d)| N_F(d)}{d^{k+1}}\ll \frac1{d^{1+1/t-\varepsilon}} ,
		\end{equation*}
		and choose $0<\varepsilon<1/t$. Here
		\begin{equation*}
			\sum_{d=1}^\infty \frac{\mu(d) N_F(d)}{d^{k+1}}
			= \prod_p \sum_{\nu=0}^\infty \frac{\mu(p^\nu) N_F(p^\nu)}{(p^\nu)^{k+1}} = \prod_p \left(1 - \frac{N_F(p)}{p^{k+1}}\right),
		\end{equation*}
		writing as an Euler product, applying the multiplicativity of the function $N_F(n)$. Also, by Lemma \ref{Lemma_sum},
		\begin{equation*}
			x^k \sum_{d \leq x} \frac{N_F(d)}{d^k}\ll x^k \sum_{d \leq x} \frac{d^{k-1/t+\varepsilon}}{d^k}=
			x^k \sum_{d\le x} d^{-1/t+\varepsilon} \ll x^{k+1-1/t+\varepsilon},
		\end{equation*}
		\begin{equation*}
			x^{k+1} \sum_{d > x} \frac{N_F(d)}{d^{k+1}} \ll x^{k+1} \sum_{d >x} \frac{d^{k-1/t+\varepsilon}}{d^{k+1}}=
			x^{k+1} \sum_{d> x} d^{-1-1/t+\varepsilon} \ll x^{k+1-1/t+\varepsilon},
		\end{equation*}
		giving the error term of \eqref{asympt_form}.
	\end{proof} 
	
	\begin{proof}[Proof of Theorem {\rm \ref{Th_varphi_lin_gen}}] This is an immediate consequence of Lemma \ref{Lemma_syst_lin_cong} and identity \eqref{eq_2}.
	\end{proof}
	
	\begin{proof}[Proof of Theorem {\rm \ref{Th_varphi_k}}]
		Let $p$ be a prime. By Lemma \ref{Lemma_cong_prod} the number of solutions $N_k(p)$ of the
		congruence $x_1\cdots x_k(x_1+\cdots +x_k)\equiv 0$ (mod $p$) is 
		\begin{equation*}
			N_k(p) = p^k- \frac{p-1}{p}\left((p-1)^k+(-1)^{k-1}\right),
		\end{equation*}
		giving
		\begin{equation*}
			1-\frac{N_k(p)}{p^k} =\left(1-\frac1{p}\right) \left(\left(1-\frac{1}{p}\right)^k - \frac{(-1)^{k}}{p^k} \right),
		\end{equation*}
		
		Now by \eqref{eq_2} we deduce formula \eqref{varphi_k_form}. 
	\end{proof}
	
	\begin{proof}[Proof of Theorem {\rm \ref{Th_varphi_k_apost}}] By Lemma \ref{Lemma_cong_system}
		the number of solutions of the corresponding system of congruences is
		\begin{equation*}
			N'_k(p)= p^{k-1} - \frac1{p}\left((p-1)^k+(-1)^k(p-1)\right).    
		\end{equation*}
		
		This gives 
		\begin{equation*}
			1-\frac{N'_k(p)}{p^k} =\left(1-\frac1{p}\right) \left(1+ \frac{1}{p}\left(1-\frac1{p}\right)^{k-1} + \frac{(-1)^{k}}{p^k} \right),
		\end{equation*}
		and by \eqref{eq_2} we obtain \eqref{Varphi_k_apost}.
	\end{proof}
	
	\begin{proof}[Proof of Theorem {\rm \ref{Th_Phi_k_linear}}]
		Apply Lemma \ref{Lemma_linear} for $n=p$, a prime. Let $N_k(\a,b,p)$ be the number of solutions of the congruence
		$a_1x_1+\cdots +a_kx_k+b\equiv 0$ (mod $p$). We deduce that 
		
		Case 1. Let $p\mid \gcd(a_1,\ldots,a_k)$. Then $\gcd(a_1,\ldots,a_k,p)=p$ and $N_k(\a,b,p)=p^k$ if $p\mid b$,
		and $N_k(\a,b,p)=0$ if $p\nmid b$.
		
		Case 2. If $p\nmid \gcd(a_1,\ldots,a_k)$, then $\gcd(a_1,\ldots,a_k,p)=1$ and $N_k(\a,b,p)=p^{k-1}$ for all values of $b$.
		
		Now, \eqref{id_linear} follows by \eqref{eq_2}.
	\end{proof}
	
	\begin{proof}[Proof of Theorem {\rm \ref{Th_Phi_k_squares}}]
		Apply the identities deduced in Lemma \ref{Lemma_quadratic} for the number $N_k(\a,0,p)$ of solutions of the congruence
		$a_1x_1^2+\cdots +a_kx_k^2\equiv 0$ (mod $p$). If $p>2$ and $k$ is even, then
		\begin{align*}
			1- \frac{N_k(\a,0,p)}{p^k}  & = 1 - \frac1{p} \left(1+ \frac{p-1}{p^{k/2}} (-1)^{k(p-1)/4} \left( \frac{a_1\cdots a_k}{p} \right) \right) \\
			& = \left(1-\frac1{p}\right) \left(1- \frac1{p^{k/2}} (-1)^{k(p-1)/4} \left( \frac{a_1\cdots a_k}{p} \right)\right). 
		\end{align*}
		
		If $p>2$ and $k$ is odd, then
		\begin{equation*}
			1- \frac{N_k(\a,0,p)}{p^k}   = 1 - \frac1{p}.
		\end{equation*} 
		
		Since for $p=2$ we have $N_k(\a,0,2)=2^{k-1}$ for all values of $k$,
		using \eqref{eq_2} we obtain identities \eqref{id_square_odd} and \eqref{id_square_even}.
	\end{proof}
	
	\begin{proof}[Proof of Theorem {\rm \ref{Th_Phi_k_last}}]
		Use Lemma \ref{Lemma_syst_cong_quadr} and identity \eqref{eq_2}. 
	\end{proof}
	
	\begin{proof}[Proof of Theorem {\rm \ref{Th_main_unitary}}]
		We have, by using that $d\mid \mid (a,b)_*$ holds if and only if $d\mid a$ and $d\mid \mid b$, 
		\begin{align*}
			\Phi_F^*(n) & = \sum_{1\le x_1,\ldots,x_k\le n} \sum_{d\mid (\gcd(f_1(x_1,\ldots, x_k), \ldots, f_m(x_1,\ldots,x_k)),n)_*}  
			\mu^*(d) \\
			& = \sum_{1\le x_1,\ldots,x_k\le n} \sum_{\substack{d\mid \mid n\\ d \mid f_i(x_1,\ldots, x_k), 1\le i\le m}} \mu^*(d) \\
			& = \sum_{d\mid \mid  n} \mu^*(d) \sum_{\substack{1\le x_1,\ldots,x_k\le n \\  f_i(x_1,\ldots, x_k)\equiv 0 \text{ (mod $d$)}, 1\le i\le m}} 1 \\
			& = \sum_{d\mid \mid n} \mu^*(d) N_F(d) \left(\frac{n}{d}\right)^k, 
		\end{align*}
		similarly to the proof of Theorem \ref{Th_main}, giving \eqref{eq_1_unit}. Since the function $N_F(n)$ is multiplicative, we deduce that $\Phi_F^*(n)$ is also multiplicative, being the unitary convolution of multiplicative functions, and  
		this leads to \eqref{eq_2_unit}.
	\end{proof} 
	
	\begin{proof}[Proof of Theorem {\rm \ref{Th_varphi_unit_lin_gen}}] 
		Use Lemma \ref{Lemma_syst_lin_cong} and \eqref{eq_2_unit}.
	\end{proof} 
	
	\begin{proof}[Proof of Theorem {\rm \ref{Th_unitary_2}}] 
		Use Lemma \ref{Lemma_cong_system_prime_power} and \eqref{eq_2_unit}.
	\end{proof} 
	
	\begin{proof}[Proof of Theorem {\rm \ref{Th_Phi_k_linear_unit}}]
		Similarly to the proof of the above identities, by using Lemma \ref{Lemma_linear}.
	\end{proof}
	
	\section{Some more remarks and further research} \label{Sect_Further_remarks}
	
	Formulas concerning the number $N_k(\a,b,n)$ of solutions of the quadratic congruence $a_1x_1^2+\cdots + a_kx_k^2\equiv b$ 
	(mod $n$), where $a_1,\ldots,a_k,b\in \Z$ and $n\in \N$, are known in the literature. See, e.g., the survey by the second author \cite{Tot2014}, including direct proofs. Also, see Grau and Oller-Marc\'{e}n \cite{GM2019}. 
	
	To deduce a formula for the unitary function $\Phi^{(2)*}_{k,\a,0}(n)$ (case $b=0$) one can use the following identities. If $k$ is odd, $n$ is odd and $\gcd(a_1\cdots a_k,n)=1$, then by \cite[Prop.\ 19]{Tot2014},
	\begin{align*}
		N_k(\a,0,n)= n^{k-1} \sum_{d^2\mid n} \frac{\varphi(d)}{d^{k-1}}.
	\end{align*} 
	
	If $k$ is even, $n$ is odd and $\gcd(a_1\cdots a_k,n)=1$, then according to \cite[Prop.\ 4]{Tot2014},  
	\begin{align*}
		N_k(\a,0,n)= n^{k-1} \sum_{d\mid n} \frac{\varphi(d)}{d^{k/2}} \left( \frac{(-1)^{k/2} a_1\cdots a_k}{d} \right),
	\end{align*}
	where $\left( \frac{\DOT}{d} \right)$ denotes the Jacobi symbol.
	
	By taking $n=p^\nu$, where $p>2$ is a prime and $\nu \in \N$, explicit formulas for $N_k(\a,0,p^{\nu})$
	can be given. The case $p=2$ can also be treated. Now, \eqref{eq_2_unit} leads to explicit formulas computing 
	$\Phi^{(2)*}_{k,\a,0}(n)$.
	
	The case $s\ge 3$ of the functions $\Phi^{(s)}_{k,\a,b}(n)$ and $\Phi^{(s)*}_{k,\a,b}(n)$ can also be treated.
	Williams \cite{Wil1966} deduced an explicit but complicated formula for the number of solutions of the congruence $a_1x_1^3+\cdots +a_kx_k^3+b\equiv 0$ (mod $p$), with $a_1,\ldots, a_k\in \Z$, $p\nmid a_1\cdots a_k$. According to \eqref{eq_2} it leads to a formula to compute $\Phi^{(3)}_{k,\a,b}$(n). For the general case $s\ge 3$ and for the number of solutions of some more general congruences see, e.g, Berndt et al. \cite{BEW1998}, Hull \cite{Hul1932}, 
	Ireland and Rosen \cite{IR1990}, Lemmermeyer \cite{Lem2000}, Li and Ouyang \cite{LO2018}. 
	
	See Nilsson and Nyqvist \cite{NN2021} for an approach to generalize Lemma \ref{Lemma_syst_lin_cong} in order to 
	deduce the number of solutions of the linear congruence system 
	$a_{i1}x_1+\cdots +a_{ik}x_k+b_i \equiv 0$ (mod $n$) with $a_{ij},b_i\in \Z$ ($1\le i,j\le k$). This concerns the functions
	$\phi_{A,B}(n)$ and $\phi^*_{A,B}(n)$.
	
	We pose as open problems to improve the error term in the asymptotic formula of Theorem \ref{Th_asympt} concerning the function
	$\Phi_F(n)$ and to deduce an asymptotic formula for the unitary function $\Phi_F^*(n)$.


\begin{thebibliography}{99}
		
		\bibitem{Ald1958} H.~L.~Alder, A generalization of the Euler $\phi$-function, {\it Amer. Math. Monthly} {\bf 65} (1958), 690--692.
		
		\bibitem{Ban_etal2024} D.~Banerjee, B.~Chahal, S.~Chaubey, K.~Khurana,  Distribution of values of general Euler totient function,
		{\it J. Math. Anal. Appl.} {\bf 530} (2024), Paper No. 127660, 21 pp.
		
		\bibitem{BEW1998} B.~C.~Berndt, R.~J.~Evans, and K.~S.~Williams, {\it Gauss and Jacobi Sums}, Canadian
		Mathematical Society Series of Monographs and Advanced Texts, New York, NY: John Wiley \& Sons, 1998.
		
		\bibitem{Cald_etal2015} C.~Calder\'{o}n, J.~M.~Grau, A.~M.~Oller-Marc\'{e}n, and L.~T\'{o}th, Counting invertible sums of squares modulo $n$ 
		and a new generalization of Euler's totient function, {\it Publ. Math. Debrecen} {\bf 87} (2015), 133--145.
		
		\bibitem{Chi1974} J.~Chidambaraswamy, Totients with respect to a polynomial, {\it Indian J. Pure Appl.
			Math.} {\bf 5} (1974), 601--608.
		
		\bibitem{Chi1979} J.~Chidambaraswamy, Totients and unitary totients with respect to a set of polynomials, {\it Indian J. Pure Appl. Math.} {\bf 10} (1979), 287--302.
		
		\bibitem{Coh1960} E.~Cohen, Arithmetical functions associated with the unitary divisors of an integer, {\it  Math Z.} {\bf 74} (1960), 66--80.
		
		\bibitem{GL1983} P.~G.~Garcia and  S.~Ligh, A generalization of Euler's $\phi$-function, {\it Fibonacci Quart.} {\bf 21} (1983), 26--28.
		
		\bibitem{GM2019} J.~M.~Grau and A.~M.~Oller-Marc\'{e}n,
		Fast computation of the number of solutions to $x_1^2+\cdots + x_k^2\equiv \lambda$ (mod $n$),
		{\it J. Number Theory} {\bf 200} (2019), 427--440.
		
		\bibitem{Hua1965} L.~K.~Hua, {\it Additive Theory of Prime Numbers}, Transl. Math. Monogr., 
		Vol. 13, Amer. Math. Society, Providence, 1965.
		
		\bibitem{Hua1982} L.~K.~Hua, {\it Introduction to Number Theory}, Springer, 1982.
		
		\bibitem{Hul1932} R.~Hull, The numbers of solutions of congruences involving only $k$th powers,
		{\it Trans. Amer. Math. Soc.} {\bf 34} (1932), 908--937.
		
		\bibitem{IR1990} K.~Ireland and M.~Rosen, {\it A Classical Introduction to Modern Number Theory},
		Springer, 1990. 
		
		\bibitem{Leh1913} D.~N.~Lehmer, Certain theorems in the theory of quadratic residues,
		{\it Amer. Math. Monthly} {\bf 20} (1913), 151--157.
		
		\bibitem{Lem2000} F.~Lemmermeyer, {\it Reciprocity Laws, From Euler to Eisenstein},
		Springer, 2000.
		
		\bibitem{LO2018} S.~Li and Y.~Ouyang, Counting the solutions of $\lambda_1x_1^{k_1}+ \cdots + \lambda_t x_t^{k_t} \equiv c$ (mod $n$), {\it J. Number Theory} {\bf 187} (2018), 41--65. 
		
		\bibitem{McC1986} P.~J.~McCarthy, {\it Introduction to Arithmetical
			Functions}, Springer, 1986. 
		
		\bibitem{Men1967} P.~K.~Menon, An extension of Euler's function, {\it Math Student}
		{\bf 35} (1967), 55--59.
		
		\bibitem{Nag1923} T.~Nagell, Verallgemeinerung eines Satzes von Schemmel, {\it Skr. Norske
			Vid. Akad. Oslo I. Math.-Naturv. Klasse} {\bf 13} (1923), 23--25.
		
		\bibitem{Nag1966} K.~Nageswara Rao, On the unitary analogues of certain totients,
		{\it Monatsh. Math.} {\bf 70} (1966), 149--154.
		
		\bibitem{NN2021} M.~Nilsson and R.~Nyqvist, Number of solutions of linear congruence systems, Preprint, 2021,
		arXiv:1208.3550v3 [math.NT].
		
		\bibitem{Ros2011}  K.~H.~Rosen, {\it Elementary Number Theory and its Applications},
		Addison-Wesley, Reading, MA, Sixth Edition, 2011.
		
		\bibitem{SC2004} J.~S\'andor and B.~Crstici, {\it Handbook of number theory II.},
		Kluwer, 2004.
		
		\bibitem{Sch1869} V.~Schemmel, \"Uber relative Primzahlen, {\it	J. Reine Angew. Math.} 
		{\bf 70} (1869), 191--192.
		
		\bibitem{Siv1989} R.~Sivaramakrishnan, {\it Classical Theory of Arithmetic Functions},
		Marcel Dekker, 1989.
		
		\bibitem{SS2023} J.~Steuding and A.~I.~Suriajaya, 
		Mean values associated with a generalized Schemmel function, {\it 
		Ann. Univ. Sci. Budap. Rolando Eötvös, Sect. Comput.} {\bf 54} (2023), 307--319.
		
		\bibitem{Ste1971} H.~Stevens, Generalizations of the Euler $\phi $-function, {\it Duke Math. J.} {\bf 38} (1971), 181--186.
		
		\bibitem{Tot1987} L.~T\'oth, A note on a generalization of Euler's $\phi$ function, {\it Fibonacci Quart.} {\bf 25} (1987), 
		241--244.
		
		\bibitem{Tot2010} L.~T\'oth, A survey of gcd-sum functions, {\it J. Integer Seq.} {\bf 13} (2010), Article 10.8.1, 23 pp.
		
		\bibitem{Tot2014} L.~T\'oth, Counting solutions of quadratic congruences in several variables revisited, {\it J. Integer Seq.} {\bf 17} (2014), Artice 14.11.6,
		23 pp.
		
		\bibitem{Tot2022} L.~T\'oth, Another generalization of Euler's arithmetic function and Menon's identity, {\it Ramanujan J.} {\bf 57} (2022), 811--822.
		
		\bibitem{TotHau1996} L.~T\'oth and P.~Haukkanen, A generalization of Euler's $\varphi$-function with respect to a set of polynomials, 
		{\it Ann. Univ. Sci. Budapest. E\"otv\"os Sect. Math.} {\bf 39} (1996), 69--83.
		
		\bibitem{TotSan1989} L.~T\'oth and J.~S\'andor, An asymptotic formula concerning a generalized Euler function, {\it Fibonacci Quart.} {\bf 27} (1989), 176--180.
		
		\bibitem{Wil1966} K.~S.~Williams, On the number of solutions of a congruence, {\it Amer. Math. Monthly} {\bf 73} (1966), 44--49.
	\end{thebibliography}
\end{document}